
\documentclass{amsart}
\usepackage[latin1]{inputenc}
\usepackage[active]{srcltx}

\usepackage{amsfonts,enumerate}

\usepackage[mathscr]{euscript}
\usepackage{epsfig}
\usepackage{latexsym}

\usepackage[all]{xy}

\usepackage{amssymb}
\usepackage{amsthm}
\usepackage{amsmath}
\usepackage{amsxtra}


.tfm
.tfm


\theoremstyle{plain}
\newtheorem{prop}{Proposition}[section]
\newtheorem{thm}[prop]{Theorem}

\newtheorem{lemma}[prop]{Lemma}
\newtheorem{conj}{Conjecture}

\newtheorem{ass}{Assumption}
\newtheorem*{thmA}{Theorem}

\theoremstyle{definition}
\newtheorem{defi}[prop]{Definition}

\theoremstyle{remark}

\newtheorem{remark}{Remark}

\numberwithin{table}{section}


\DeclareMathOperator{\Frob}{Frob}
\DeclareMathOperator{\Symm}{Symm}
\DeclareMathOperator{\Spec}{Spec}
\DeclareMathOperator{\HT}{HT}
\DeclareMathOperator{\WD}{WD}
\DeclareMathOperator{\Gal}{Gal}

\DeclareMathOperator{\Ind}{Ind}

\DeclareMathOperator{\im}{Im}


\newcommand{\R}{\mathscr R}

\newcommand{\F}{\mathbb F}

\newcommand{\N}{\mathcal{N}}

\newcommand{\Om}{{\mathscr{O}}}

\newcommand{\GL}{{\rm GL}}
\newcommand{\SL}{{\rm SL}}
\newcommand{\PGL}{{\rm PGL}}
\newcommand{\PSL}{{\rm PSL}}

\def\TT{\mathbb T}
\def\ZZ{\mathbb Z}

\def\FF{\mathbb F}
\def\QQ{\mathbb Q}

\def\CC{\mathbb C}



\def\<#1>{{\left\langle{#1}\right\rangle}}



\def\Q{{\mathbb Q}}             



\def\id#1{{\mathfrak{#1}}}      
\def\normid#1{{\norm{\id{#1}}}}
\DeclareMathOperator{\norm}{{\mathscr N}}

\begin{document}

\title[Connectedness of Hecke Algebras]{Connectedness of Hecke Algebras and the Rayuela conjecture:
a path to functoriality and modularity}

\author{Luis Dieulefait}
\address{Departament d'\`Algebra i Geometria, Facultat de Matem\`atiques,
  Universitat de \break Barcelona, Gran Via de les Corts Catalanes,
  585. 08007 Barcelona}
\email{ldieulefait@ub.edu}
\thanks{L.D. partially supported by MICINN grants MTM2012-33830 and by an ICREA Academia Research Prize}

\author{Ariel Pacetti}
\address{Departamento de Matem\'atica, Facultad de Ciencias Exactas y Naturales, Universidad de Buenos Aires and IMAS, CONICET, Argentina}
\email{apacetti@dm.uba.ar}
\thanks{AP was partially supported by CONICET PIP 2010-2012 GI and FonCyT BID-PICT 2010-0681.}
\keywords{Base Change, Rayuela Conjecture}
\subjclass[2010]{11F33}
\date{August 26, 2014}
\dedicatory{A la memoria de Julio Cortazar, autor de "Rayuela", al cumplirse hoy cien años de su nacimiento.}

\begin{abstract}
  Let $\rho_1$ and $\rho_2$ be a pair of residual, odd, absolutely
  irreducible two-dimensional Galois representations of a totally real
  number field $F$.  In this article we propose a conjecture asserting
  existence of ``safe" chains of compatible systems of Galois
  representations linking $\rho_1$ to $\rho_2$. Such conjecture
  implies the generalized Serre's conjecture and is equivalent to
  Serre's conjecture under a modular version of it. We prove a weak
  version of the modular variant using the connectedness of
  certain Hecke algebras, and we comment on possible applications of
  these results to establish some cases of Langlands functoriality.
\end{abstract}

\maketitle


\section{Introduction}

Let $F$ be a totally real number field. In \cite{KK} Khare and Kiming
conjectured that given $\rho_1$ and $\rho_2$ two odd, absolutely
irreducible, two-dimensional residual representations of the absolute
Galois group of $F$ with values on finite fields of different prime
characteristics $\ell$ and $\ell'$, satisfying certain local
compatibilities (for all primes $q$ different from $\ell$ and $\ell'$,
there exists a Weil-Deligne parameter and a choice of integral model
such that their reductions modulo $\ell$ and $\ell'$ are isomorphic to
the restrictions of $\rho_1$ and $\rho_2$ to a decomposition group at
$q$), there exists a modular form which residually coincides
with $\rho_1$ at $\ell$ and $\rho_2$ at $\ell'$. This is a very strong
statement and no evidence for its truth is known. In particular it
implies that given two Hilbert modular forms $f$ and $g$ over $F$, and
two prime numbers $\ell$ and $\ell'$, if we assume suitable local
compatibilities, then there exists a Hilbert modular form $h$ which is
congruent to $f$ modulo $\ell$ and to $g$ modulo $\ell'$.

The purpose of this article is to state a weaker conjecture called
{\bf The Rayuela Conjecture} involving ``chains of congruent
systems" connecting two given Galois representations. The idea is
not to link any given pair of residual Galois representations via a
single compatible system of Galois representations but via a chain
of such systems, meaning that each of them has to be congruent to
the next one in the chain, modulo a suitable prime, and it will also
be required that some reasonable properties are being satisfied at
each of these congruences (this will be done in
Section~\ref{sec:rayuela}).
Such conjecture has important consequences, like Serre's conjectures
for totally real number fields or base change for totally real
number fields (the second one follows from a weaker version that we
will explain later). The conjecture for the field of rational
numbers follows from the results proved by Dieulefait in
\cite{LuisBC2}  (the reader can easily check that the method used to
prove base change in loc. cit. implies, in fact is based on, the
fact that this conjecture is true over $\Q$ at least for modular
Galois representations, and this combined with the truth of Serre's
conjecture over $\Q$ implies the conjecture).

Although we are not able to prove the conjecture itself, in Sections
~\ref{sec:abstract} and ~\ref{sec:killlevel} we prove that some
(weak) variant of it holds in the modular world. This is a
generalization of a Theorem of Mazur, a result that he proved for
the case of
weight $2$ modular forms of prime level $N$.

In this article, for general weights and levels, and for any totally real number
field, we extend Mazur's result, and the control we get in the level
and weights of the modular forms involved might be used to prove the
modular version of the conjecture (although there are still some
ingredients missing to get the full statement).

Also we will show that our Conjecture is equivalent to the generalized
Serre's conjecture proposed in \cite{BDJ}, at least if some
strengthening of the available Modularity Lifting Theorems is
assumed. What we show is how one can manipulate a pair of modular
Galois representations to end up in a controlled situation where both
representations have the same Hodge-Tate weights and ramification in a
small controlled set of primes (and then this is combined with Mazur's
result).

Combining the ideas of the present article with some nowadays
standard tricks (like the use of Micro Good Dihedral primes), one
can prove for some small real quadratic fields a Base Change
theorem, all that is left is a finite computational check (to ensure
that some Hecke Algebra of known level and weight is connected in a
good way). This is part of a work in progress of the authors but
more will be said at the end of the present article.

For most of the chains constructed in this paper, the construction
carries on for the case of abstract Galois representations (the reader
should keep in mind that the possibility of constructing congruences
between abstract Galois representations where some local information
changes and the existence of compatible systems containing most
geometric $p$-adic Galois representations are the two main technical
ingredients in the proofs of Serre's conjecture over $\Q$), and then
again the use of Micro Good Dihedral primes combined with the results
in this paper is enough to reduce the proof of Serre's conjecture over
a given small real quadratic field to some special cases of Galois
representations with small invariants (Serre's weights and
conductor). Thus it is perfectly conceivable that one can give a
complete proof of Serre's conjecture over a small real quadratic field
$F$, by just completing this process with a few extra steps designed
to remove the Micro Good Dihedral prime from the level (eventually
relying on results of Skinner and Wiles~\cite{SW} if the residually
reducible case has to be considered) and end up in some ``base case
for modularity" over $F$, such as the modularity/non-existence of
semistable abelian varieties of conductor $3$ over $\Q(\sqrt{5})$
proved by Schoof (\cite{Schoof}). We plan to check over which real
quadratic fields, such proof of Serre's conjecture can be completed in
a future work.

Section~\ref{sec:MLT} contains the Modularity Lifting Theorems (that
will be denoted MLT) used and needed in the present article. We have
included a mixed case that has only been proved in weight $2$
situations, yet we assume its truth in more general situations. We
believe that such a result can be deduced from the techniques in
 \cite{BLGGT2}, but we haven't formally checked this. \\


\noindent{\bf Conventions and notations:} If $F$ is a number field,
we denote by $\Om_F$ its ring of integers. By $G_F$ we denote the
Galois group $\Gal(\bar{F}/F)$. All Galois representations (residual
or $\ell$-adic) are assumed to be continuous.  Let $\id{p}$ be a
prime ideal in $\Om_F$. We denote by $\Frob_{\id{p}}$ a Frobenius
element over $\id{p}$. \\

To ease notation, instead of specifying a prime $\pi$ in the field
of coefficients of a Galois representation and reducing mod $\pi$,
sometimes
 we will simply say that we reduce ``mod $p$", where
$p$ is the rational prime below $\pi$. We expect that this will be
no cause of
confusion.

Concerning local types of strictly compatible systems of Galois
representations, we will use the words ``Steinberg'', ``principal
series'' and ``supercuspidal'', to denote the local representations
they correspond to under local Langlands.

\bigskip

\noindent{\bf Acknowledgments:} we want to thank Fred Diamond for
useful comments and remarks and for the proof of
Lemma~\ref{lemma:Diamond}.

\section{The Rayuela Conjecture}\label{sec:rayuela}

There are nowadays many MLT that can be applied to propagate
modularity through a congruence between two $\ell$-adic Galois
representations. For us, the main interest in congruences between
Galois representations is in the case where a MLT holds in both
directions (i.e. modularity of either of the two representations
implies modularity of the other one), so we will call a congruence an
MLT congruence if the hypothesis of some MLT theorem are fulfilled in
both directions.  Since MLT hypothesis are becoming less restrictive
with time, some of the proofs we give make use of tricks that are
required to reduce to situations in which some of the available MLT
applies, but some of these tricks are very likely to become obsolete
in the near future (when new MLT are proved). There are also steps in
the chains that we are going to build in our attempt to connect two
given modular or abstract Galois representations that involve
congruences that are not known to be MLT. This is the unique reason
why we are not able to prove any strong result in this paper, such as
relative base change (see the discussion at the end of the
article).


Let us now state our conjecture for abstract Galois representations.

\begin{conj} {\bf The Rayuela Conjecture}:
Let $F$ be a totally real number field, $\ell_0$, $\ell_\infty$
prime numbers and
\[
\rho_i:\Gal(\bar{F}/F) \to \GL_2(\overline{\FF}_{\ell_i}),
\qquad{i=0, \infty},
\]
two absolutely irreducible odd Galois representations. Then there
exists a family of odd, absolutely irreducible, 2-dimensional strictly
compatible systems of Galois representations
$\{\rho_{i,\lambda}\}_{i=1}^n$ such that:
\begin{itemize}
\item $ \rho_0 \equiv \rho_{1,\lambda_0} \pmod{\lambda_0}$, with
$\lambda_0 \mid \ell_0$.
\item $ \rho_\infty \equiv \rho_{n,\lambda_\infty}\pmod{\lambda_\infty}$, with
$\lambda_\infty \mid \ell_\infty$.
\item for $i=1,\dots,n-1$ there exist $\lambda_i$ prime such that
$\rho_{i,\lambda_i} \equiv \rho_{i+1,\lambda_i} \pmod{\lambda_i}$,
\item all the congruences involved are MLT.
\end{itemize}
\label{conj:p1pn}
\end{conj}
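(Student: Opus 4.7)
The plan is to reduce the conjecture to a situation where both representations sit in strictly compatible systems attached to Hilbert modular forms of matching level and weight, and then invoke a connectedness result for the corresponding Hecke algebra. Concretely, I would first try to attach compatible systems to each $\rho_i$ via the generalized Serre's conjecture of \cite{BDJ}; once $\rho_0$ and $\rho_\infty$ are known to be modular, each one fits as a residual member of a strictly compatible system $\{\sigma_{0,\lambda}\}$ (resp. $\{\sigma_{\infty,\lambda}\}$) coming from a Hilbert eigenform. The whole problem then becomes one of linking two Hilbert modular compatible systems by a chain of MLT congruences, which is the ``modular Rayuela'' version mentioned in the introduction.

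Next, I would normalize the weights and the ramification of the two endpoints. Using congruences modulo auxiliary primes $p$ (where we have significant flexibility in the $p$-adic local type), together with weight-changing moves such as companion forms, $\theta$-operator style shifts, and the Micro Good Dihedral trick emphasized by the authors, I would construct intermediate compatible systems $\{\rho_{i,\lambda}\}$ that progressively force the Hodge-Tate weights of the two chains to coincide, and that bring the ramification into a common small, controlled set $S$ of primes. Each such ``local type change'' should be engineered so that the intervening congruence satisfies the hypotheses of an MLT in both directions (here the results of Section~\ref{sec:MLT} are used, possibly after adding auxiliary Steinberg or supercuspidal primes to guarantee big image and distinguished residual image).

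Once both endpoints live in the space of Hilbert modular forms of a fixed parallel weight $k$ and fixed level $\N \subset \Om_F$, I would apply the generalization of Mazur's theorem promised in Sections~\ref{sec:abstract}--\ref{sec:killlevel}: the associated (localized) Hecke algebra at a suitable maximal ideal $\id{m}$ is connected, which translates into the existence of a chain of mod-$\ell$ congruences linking any two newforms of that level and weight through intermediate newforms. Interpreting this chain as a sequence of compatible systems linked by MLT congruences gives the desired Rayuela chain between the two modified endpoints, and concatenating with the earlier normalizing steps closes the loop between $\rho_0$ and $\rho_\infty$.

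The hard part will be \emph{two} things. First, every congruence in the chain must be an MLT congruence in both directions, so at each step one needs a residually irreducible representation with sufficiently large image and a distinguished local structure; controlling this while simultaneously changing weights and ramification is delicate and seems to require either stronger MLTs than currently available (of the mixed type hinted at in Section~\ref{sec:MLT}) or clever auxiliary constructions. Second, for the abstract version of the conjecture, one cannot rely on modularity to produce the compatible systems, and one must instead construct them directly using potential automorphy and lifting theorems \emph{\`a la} \cite{BLGGT2}, which is precisely where the authors acknowledge that ingredients are still missing. Thus I expect the main obstacle to be the simultaneous preservation of the MLT hypothesis and of residual absolute irreducibility across all intermediate steps of the chain, rather than the connectedness of the Hecke algebra, which the results of this paper already put within reach.
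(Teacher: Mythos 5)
This statement is a \emph{conjecture}, not a theorem, and the paper offers no proof of it; indeed the authors explicitly write that they ``think Conjecture~\ref{conj:p1pn} is out of reach.'' So there is no ``paper's own proof'' to compare against. What the paper does is (i) observe that Serre's generalized conjecture plus the modular variant (Conjecture~\ref{conj:p1pnmod}) together imply the Rayuela conjecture, and (ii) prove only weak fragments of the modular variant --- Mazur's connectedness theorem (Theorem~\ref{thm:Mazur}, with \emph{no} MLT guarantee on the congruences) and Theorem~\ref{thm:main} (which reduces to a fixed level and weight but still leaves the final Hecke-connectedness step unverified as MLT).

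Your proposal is an accurate paraphrase of the paper's strategy, and to your credit you correctly identify the two genuine gaps (every congruence must be MLT in both directions; the abstract case cannot start from modularity). But you present this as a route to the conjecture when in fact it is exactly the point at which the paper stops. In particular: invoking BDJ Serre's conjecture at the first step means you are not proving the Rayuela conjecture but rather reducing it to Serre's conjecture together with the modular Rayuela conjecture, which is precisely the content of Theorem 2.4 of the paper and not a proof; and the Mazur-type Hecke-algebra connectedness you appeal to produces chains of congruences without any control over whether they satisfy MLT hypotheses, which is the whole difficulty. The honest conclusion is that no proof of this statement is currently available, and your ``proposal'' should be framed as a reduction plus an identification of the open obstacles rather than as a proof sketch.
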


\begin{remark}
The primes involved always are taken to be in the field of
coefficient of the corresponding system of representations (for the
first and second condition) or in the compositum of the two relevant
such fields (for the third conditions). From now on, this remark
applies to all congruence between compatible systems appearing in this paper.
\end{remark}

The last hypothesis of the conjecture implies that modularity of one
of the given  representations can be propagated through the chain of
congruences produced by the conjecture allowing to prove modularity
of the other one. It is thus clear that Conjecture~\ref{conj:p1pn}
implies Serre's generalized conjecture over $F$ by just taking
$\rho_\infty$ to be an irreducible residual representation attached
to any cuspidal Hilbert modular form over $F$.

At this time we think Conjecture~\ref{conj:p1pn} is out of reach,
but since it does not involve directly modular forms, it might lead
to a different attack to Serre's conjecture. Note that in the
hypothesis of the Conjecture, one can put $\rho_0$ inside a strictly
compatible system of Galois representations as done by
\cite{LuisFamilies} for representations over $\QQ$ and by
\cite{Snowden} for totally real fields (using Theorem 7.6.1 and
Corollary 1.1.2). The main difficulty for abstract representations
is to connect them. During this work we will show how to manipulate
abstract representations such that if we start with any pair of them
we can connect both, through suitable chains of MLT congruences,  to
representations having common values for their Serre's level and
weights, but we
cannot go any further so far. \\

For different purposes, such as applications to Langlands
functoriality, it is interesting to study the previous conjecture in
the modular setting. In this case we are able to prove part of it,
namely, we are able to build the chain of congruences but we can not
ensure that congruences are MLT at all steps. Still, we find this
interesting because advances in MLT theorems may eventually lead to
a proof of this modular variant of the conjecture, and this will be
evidence for the truth of the Rayuela conjecture. In fact it will
give a proof that this conjecture is equivalent to the generalized
Serre's conjecture over $F$ (the other implication being trivial as
already remarked). The modular variant is the following:

\begin{conj} Let $F$ be a totally real number field, $\ell_0$, $\ell_\infty$
prime numbers and
\[
\rho_i:\Gal(\bar{F}/F) \to \GL_2(\overline{\FF}_{\ell_i}),
\qquad{i=0, \infty},
\]
two absolutely irreducible odd Galois representations attached to
Hilbert modular newforms $f_0$ and $f_\infty$, respectively. Then
there exists a family of odd, absolutely irreducible, 2-dimensional
strictly compatible systems of Galois representations
$\{\rho_{i,\lambda}\}_{i=1}^n$ such that:
\begin{itemize}
\item $ \rho_0 \equiv \rho_{1,\lambda_0} \pmod{\lambda_0}$, with
$\lambda_0 \mid \ell_0$.
\item $ \rho_\infty \equiv \rho_{n,\lambda_\infty}\pmod{\lambda_\infty}$, with
$\lambda_\infty \mid \ell_\infty$.
\item for $i=1,\dots,n-1$ there exist $\lambda_i$ prime such that
$\rho_{i,\lambda_i} \equiv \rho_{i+1,\lambda_i} \pmod{\lambda_i}$,
\item all the congruences involved are MLT.
\end{itemize}

\label{conj:p1pnmod}
\end{conj}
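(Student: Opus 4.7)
The plan is to prove Conjecture~\ref{conj:p1pnmod} in three segments: first, build a chain of compatible systems going from $f_0$ to a Hilbert newform $g_0$ of some controlled level $N$ and parallel weight $k$; second, perform the symmetric reduction starting from $f_\infty$ to obtain a Hilbert newform $g_\infty$ in the same space; third, link $g_0$ and $g_\infty$ inside the corresponding Hecke algebra by invoking the generalization of Mazur's connectedness theorem that is advertised as the main content of Sections~\ref{sec:abstract} and~\ref{sec:killlevel}.

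For the first two segments, I would interleave three kinds of moves. Level raising at auxiliary primes, via the Hilbert analogue of Ribet's theorem, introduces ramification matching the conductor of $f_\infty$ and inserts a Micro Good Dihedral prime that secures residual absolute irreducibility and big image at later congruence primes. Level lowering, using the Hilbert versions of Ribet's and Jarvis' results, removes ramification at primes where the residual representation becomes unramified or takes a suitable locally reducible shape. Weight changes are produced by congruences between compatible systems of different Hodge--Tate weights, obtained from Hida families of ordinary forms or from companion-form style congruences at primes of good reduction. Careful bookkeeping on the primes at which residual irreducibility, bigness of image and Taylor--Wiles avoidance hold can be arranged so that each move yields a well-defined link in the chain, with compatible local types on both sides.

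For the third segment, the generalization of Mazur's theorem shows that the Hecke algebra $\TT$ acting on the relevant new component of the space of Hilbert newforms of level $N$ and parallel weight $k$ is connected: any two eigensystems in this space can be joined by a sequence of congruences modulo primes dividing the discriminants of intermediate subquotients, each link being a congruence between compatible systems attached to Hilbert newforms of that level and weight. Concatenating this segment with the two reduction segments produces the required family $\{\rho_{i,\lambda}\}$ satisfying the first three bullets of the conjecture.

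The main obstacle is the fourth bullet, namely guaranteeing that every link in the concatenated chain is MLT. In the reduction segments one has enough freedom to choose the auxiliary and Micro Good Dihedral primes so that the standard Taylor--Wiles hypotheses hold at every step, and one expects those links to be controllable. The connectedness segment, however, produces links at primes dictated by the arithmetic of $\TT$ itself, over which one has no direct control; the residual image, the local ramification types at primes dividing $N$, and the Hodge--Tate weights at the congruence prime may all fail to satisfy the hypotheses of currently available modularity lifting theorems. Bridging this gap is precisely the obstruction that the introduction acknowledges is still missing, and my proposal would therefore establish the existence of the chain satisfying the first three conditions unconditionally, while leaving the MLT condition on the connectedness segment conditional on future strengthenings of the available lifting theorems.
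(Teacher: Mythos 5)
Your proposal mirrors what the paper actually does: reduce each of $f_0$ and $f_\infty$ by a chain of MLT congruences (inserting an auxiliary supercuspidal prime to control residual images, moving to parallel weight $2$, then eliminating principal series, supercuspidal and finally Steinberg ramification) to newforms of a common controlled level and parallel weight, and then invoke the Hilbert generalization of Mazur's connectedness theorem, while flagging that the MLT requirement on the connectedness links is the outstanding obstruction. That is precisely the content of the paper: Conjecture~\ref{conj:p1pnmod} is stated as a conjecture and is \emph{not} proved; the paper proves the MLT-free Mazur connectedness statement and the partial reduction recorded in Theorem~\ref{thm:main}, and leaves exactly the gap you identify, so your honest conclusion that only the first three bullets are obtained unconditionally is the right one.

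Two minor technical remarks. First, the paper's weight-changing moves do not go through Hida families or companion forms; they use a single congruence modulo a large split prime $p$ followed by a parallel weight $2$ lift, for which the Fontaine--Laffaille versus potentially Barsotti--Tate comparison in \cite{BLGGT2} gives the MLT property directly. Second, the paper's Mazur-style connectedness is proved via the Jacobian of a Shimura curve, which forces parallel weight $2$ and requires passing to the anemic Hecke algebra $\TT_0$ to cope with oldforms, rather than to the Hecke algebra on the new subspace as you suggest; the two are not the same object, and connectedness of the newform component would require a separate argument. Also note that the paper's chain relies at a key step on Assumption~\ref{ass:MLT4} (the mixed potentially diagonalizable/ordinary MLT), which is not currently a theorem outside the parallel weight $2$ case, and this dependence should be recorded in any honest account of which links are known to be MLT.
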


The aim of this article is to prove part of
Conjecture~\ref{conj:p1pnmod} and show some implications of it related
to modularity and functoriality. It is clear that all the
representations appearing in Conjecture~\ref{conj:p1pnmod} are
modular. Clearly we have:

\begin{thm}
Serre's generalized conjecture $+$ Conjecture~\ref{conj:p1pnmod} imply
Conjecture~\ref{conj:p1pn}.
\end{thm}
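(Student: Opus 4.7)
The plan is to observe that this implication is essentially a matter of applying the two hypotheses in succession, so the proof amounts to verifying that nothing in the statement of Conjecture~\ref{conj:p1pn} beyond the modularity assumption of Conjecture~\ref{conj:p1pnmod} is being used in the conclusion.

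First I would start from the data of Conjecture~\ref{conj:p1pn}: a totally real number field $F$, primes $\ell_0,\ell_\infty$, and two odd, absolutely irreducible residual representations $\rho_0\colon G_F\to\GL_2(\overline{\FF}_{\ell_0})$ and $\rho_\infty\colon G_F\to\GL_2(\overline{\FF}_{\ell_\infty})$. Applying the generalized Serre conjecture over $F$ to each of these representations, one obtains Hilbert modular newforms $f_0$ and $f_\infty$ whose associated residual Galois representations at suitable primes above $\ell_0$ and $\ell_\infty$ are isomorphic to $\rho_0$ and $\rho_\infty$, respectively. This step uses nothing beyond the generalized Serre conjecture as stated in \cite{BDJ}.

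Next I would feed the pair $(\rho_0,\rho_\infty)$, now realised as residual representations of Hilbert modular newforms $f_0$ and $f_\infty$, into Conjecture~\ref{conj:p1pnmod}. This produces a finite family $\{\rho_{i,\lambda}\}_{i=1}^n$ of odd, absolutely irreducible, strictly compatible systems of two-dimensional Galois representations of $G_F$ satisfying the four bulleted conditions: the endpoint congruences with $\rho_0$ and $\rho_\infty$ at primes above $\ell_0$ and $\ell_\infty$, the chain of intermediate congruences $\rho_{i,\lambda_i}\equiv\rho_{i+1,\lambda_i}\pmod{\lambda_i}$, and the assertion that all the congruences involved are MLT. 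But these are exactly the four bulleted conditions appearing in Conjecture~\ref{conj:p1pn}, so the chain produced by Conjecture~\ref{conj:p1pnmod} witnesses the conclusion of Conjecture~\ref{conj:p1pn} for the original data.

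Since this argument works for any pair $(\rho_0,\rho_\infty)$ as in Conjecture~\ref{conj:p1pn}, the implication follows. There is no real obstacle here: the only thing to check is that the list of conditions in the two conjectures is identical and that Serre's generalized conjecture provides the modular forms needed to bridge the gap between the abstract and modular settings. It is worth remarking, as already mentioned in the text, that the converse implication (Conjecture~\ref{conj:p1pn} implies the generalized Serre conjecture) is the trivial direction obtained by taking $\rho_\infty$ to be residually associated to a cuspidal Hilbert modular form over $F$ and propagating modularity through the MLT congruences in the chain.
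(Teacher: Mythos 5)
Your argument is correct and matches the paper's proof: apply Serre's generalized conjecture to realise $\rho_0$ and $\rho_\infty$ as residual representations of Hilbert modular newforms, then apply Conjecture~\ref{conj:p1pnmod} and note its conclusion is verbatim the conclusion of Conjecture~\ref{conj:p1pn}. You have merely expanded the paper's one-sentence justification into a more careful account of the same steps.
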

\begin{proof}
This is clear, since if $\rho_0$ and $\rho_\infty$ are any two
Galois representation and if Serre's conjecture holds, they are
modular, and then they are connected in the right way by Conjecture
2.
\end{proof}

Conjecture~\ref{conj:p1pnmod} can be proved using standard arguments
mainly due to Mazur, if we remove the condition that the congruences
are MLT.


\begin{thmA}[Mazur]
Let $F$ be a totally real number field and $f_0, f_\infty$ two Hilbert
modular forms, whose weights are congruent modulo $2$. Then there
exists Hilbert modular forms $\{h_i\}_{i=1}^n$ such that $h_1 =
f_0$, $h_n = f_\infty$ and
such that
for $i=1,\dots,n-1$ there exist $\lambda_i$ prime such that $\rho_{h_i,\lambda_i} \equiv \rho_{h_{i+1},\lambda_i} \pmod{\lambda_i}$.
\label{thm:Mazur}
\end{thmA}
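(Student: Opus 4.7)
The plan is to reduce both forms to a common level and weight and then invoke connectedness of an appropriate Hecke algebra; this is exactly the skeleton of Mazur's original argument, the novelty being that we must perform each of the reduction steps in the Hilbert setting, where level raising/lowering, weight shifting, and connectedness statements are all more delicate. Along the way every intermediate step must be realized as an actual mod~$\lambda$ congruence between two Hilbert modular newforms, so each manipulation is a congruence in the statement of the theorem, regardless of whether a MLT applies.

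First I would equalize the weights of $f_0$ and $f_\infty$. Since the two weights agree modulo $2$, one can interpolate through Hida families (or use congruences obtained by multiplying by Hasse-like or Eisenstein series of weight $\ell-1$) to produce a chain of congruences that changes the weight by a controlled amount at each step while preserving the parity. This is essentially the classical trick of moving up and down in the weight direction of a $\Lambda$-adic family, and each passage provides one link $\rho_{h_i,\lambda_i}\equiv \rho_{h_{i+1},\lambda_i}$ in the chain. After finitely many such steps we may assume $f_0$ and $f_\infty$ have the same weight $k$.

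Next I would equalize the levels. Using Ribet-style level raising (in the Hilbert setting, results of Jarvis, Rajaei, etc.) one can insert primes into the level of each form one by one: given a form of level $\mathfrak{n}$ and a prime $\mathfrak{q}\nmid \mathfrak{n}$ whose residual trace satisfies the level-raising congruence modulo $\lambda$, there exists a newform of level $\mathfrak{n}\mathfrak{q}$ congruent to the original one modulo $\lambda$. Combining level raising with the removal of primes via congruences between oldforms and newforms (Jacquet--Langlands transfer to a definite quaternion algebra makes the bookkeeping clean), one can bring both $f_0$ and $f_\infty$ to a common level $\mathfrak{N}$ and the common weight $k$.

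Finally, once both forms live in the same space $S_k(\mathfrak{N})$, connect them using the connectedness of the associated Hecke algebra $\mathbb{T}_{\mathfrak{N},k}$. Two normalized eigenforms in this space correspond to two minimal primes of $\mathbb{T}_{\mathfrak{N},k}$, and a chain of congruences between them is exactly a chain of minimal primes where consecutive ones are both contained in a common maximal ideal, i.e.\ a path in the dual graph of $\Spec \mathbb{T}_{\mathfrak{N},k}$; such a path exists on each connected component. The main obstacle is precisely this last step: Mazur's original connectedness statement for weight~$2$ prime level over $\mathbb{Q}$ uses the Eisenstein ideal in an essential way, and an analogous connectedness result for arbitrary level and weight over a totally real field is not formal. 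This is the content that the subsequent abstract sections of the paper (generalizing Mazur to Hilbert Hecke algebras of arbitrary level and weight) have to supply; once the connectedness is established, the chain is assembled by concatenating the weight-change, level-change, and intra-component paths constructed above.
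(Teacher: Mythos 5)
Your outline has the right skeleton (normalize weight and level, then invoke a Hecke-algebra connectedness statement), but it leaves the crucial step unproved and it misdescribes what Mazur actually does, so as it stands there is a real gap.

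First, you attribute Mazur's connectedness of $\Spec \TT$ for $X_0(N)$, $N$ prime, to the Eisenstein ideal. That is not the argument. Mazur's Proposition II.10.6 is geometric: if the Jacobian $J$ decomposed up to isogeny as $A\times B$ with $A$ and $B$ orthogonal (which is what a disconnection of $\Spec\TT$ forces, using multiplicity one), then the principal polarization of $J$ would induce principal polarizations on $A$ and $B$, contradicting the irreducibility of the $\theta$-divisor of a Jacobian. The Eisenstein ideal plays no role in this step. The paper's proof transplants exactly this geometric argument to the Jacobian of a Shimura curve over $F$, after adding an auxiliary (supercuspidal or Steinberg) prime to the level so that both forms occur in the cohomology of such a curve when $[F:\QQ]$ is odd. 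This is the content of the proof, and your proposal explicitly punts on it (``an analogous connectedness result \dots\ is not formal''), so the theorem is not actually established by your chain of reductions.

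Second, your reduction steps are also different in kind from the paper's, and some of them are heavier than needed. To equalize weights you invoke Hida families or Hasse-series congruences; the paper instead reduces \emph{both} forms to parallel weight $2$ in one mod-$p$ congruence each (Proposition~\ref{prop:parallel2}), which requires no $\Lambda$-adic machinery. To equalize levels you invoke Ribet-style level raising and lowering; the paper does not need this, because for connectedness one can simply embed both forms as oldforms in a common space $\Gamma_1(\id n)$ and work there. But then multiplicity one fails for oldforms in the Jacobian, so the paper must (and does) pass to the anemic Hecke algebra $\TT_0$ generated by operators of index prime to the level and check that the orthogonality argument still applies. Your proposal makes no mention of this oldform issue, which is one of the genuine new points when passing from $X_0(N)$, $N$ prime, to the Hilbert setting. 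Finally, the paper also removes CM via Proposition~\ref{prop:non-dihedral}; you do not address CM forms at all, and some of the level-raising statements you cite have nondihedral hypotheses.

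In short: your high-level plan is reasonable, but you have not supplied the connectedness input (which is the theorem), you have misidentified the mechanism in Mazur, and you have not addressed the oldform/multiplicity-one issue that forces passage to the anemic Hecke algebra.
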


\begin{remark}
  Although connectedness of the Hecke algebra was proved by Mazur for
  classical modular forms of weight $2$ and prime level $N$ using the
  curve $X_0(N)$, the proof we will give is strongly based in
  his argument.
\end{remark}

\begin{remark}
  If modularity would propagate via any congruence, the previous
  Theorem would be equivalent to
  Conjecture~\ref{conj:p1pnmod}. Unfortunately this is not clear
  even for classical modular forms.
\end{remark}

The main result of the present article is the following:

\begin{thm}
Let $F$ be a totally real number field, $\ell_f$, $\ell_g$
prime numbers and
\[
\rho_f:\Gal(\bar{F}/F) \to \GL_2(\overline{\FF}_{\ell_f}),
\qquad \rho_g:\Gal(\bar{F}/F) \to \GL_2(\overline{\FF}_{\ell_g}),
\]
two absolutely irreducible odd Galois representations attached to
Hilbert modular newforms $f$ and $g$, respectively. Then there exists
prime ideals $\id{q}\subset \Om_F$ and $p \in \Q$ which splits
completely in $F$, Hilbert modular forms $f_\infty$ and $g_\infty$ of
parallel weight $2$ and level $\Gamma_0(p\id{q}^2)$ and two families of
odd, absolutely irreducible, 2-dimensional strictly compatible systems
of Galois representations $\{\rho_{i,\lambda}^f\}_{i=1}^{n_f}$,
$\{\rho_{i,\lambda}^g\}_{i=1}^{n_g}$, such that:
\begin{itemize}
\item $ \rho_f \equiv \rho_{1,\lambda_f}^f \pmod{\lambda_f}$, with
$\lambda_f \mid \ell_f$.
\item $ \rho_{f_\infty,\lambda_\infty} \equiv \rho_{n_f,\lambda_\infty}^f\pmod{\lambda_\infty}$, for some prime ideal $\lambda_\infty$.
\item for $i=1,\dots,n-1$ there exist $\lambda_i$ prime such that
$\rho_{i,\lambda_i}^f \equiv \rho_{i+1,\lambda_i}^f \pmod{\lambda_i}$,
\item $ \rho_g \equiv \rho_{1,\lambda_g}^g \pmod{\lambda_g}$, with
$\lambda_g \mid \ell_g$.
\item $ \rho_{g_\infty,\lambda_\infty} \equiv \rho_{n_g,\lambda_\infty}^g\pmod{\lambda_\infty}$, for some prime ideal $\lambda_\infty$.
\item for $i=1,\dots,n-1$ there exist $\lambda_i$ prime such that
$\rho_{i,\lambda_i}^g \equiv \rho_{i+1,\lambda_i}^g \pmod{\lambda_i}$,
\item all the congruences involved are MLT.
\end{itemize}
\label{thm:main}
\end{thm}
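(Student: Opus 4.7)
The plan is to build the two chains independently and symmetrically. Each chain starts from the given residual representation ($\rho_f$ or $\rho_g$), attaches a strictly compatible system using known results (\cite{LuisFamilies}, \cite{Snowden}), and then proceeds through three stages of controlled congruences: first introduce a Micro Good Dihedral (MGD) prime so that all subsequent residual images are automatically big; second, change the weight to parallel weight $2$; third, move the ramification set so the final level is $\Gamma_0(p\id{q}^2)$. The choice of the auxiliary primes $p$ and $\id{q}$ is made once, at the outset, and used for both chains so that $f_\infty$ and $g_\infty$ end up in the same space of Hilbert modular forms.

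First I would fix the auxiliary data. Choose a rational prime $p$ which (i) splits completely in $F$, (ii) is coprime to the conductors of $\rho_f$ and $\rho_g$, and (iii) is large enough to allow the available MLTs listed in Section~\ref{sec:MLT} to apply with residual characteristic $p$. Choose a prime ideal $\id{q}\subset \Om_F$, above a rational prime $q\neq p$ which is large and totally split in $F$, together with a ramified character of the unramified quadratic extension of $F_{\id{q}}$ whose induced supercuspidal type provides a Micro Good Dihedral prime for both residual systems. Genericity of $q$ ensures that introducing this type does not collide with any ramification or Serre-weight data of $\rho_f$ or $\rho_g$ and that the resulting residual images remain absolutely irreducible with dihedral-big image at $\id{q}$, which persists through every intermediate residual representation.

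Next I would construct the chain for $f$ (the one for $g$ being identical). Embed $\rho_f$ in a strictly compatible system $\rho_{f,\lambda}$ via \cite{LuisFamilies}/\cite{Snowden}. Using level-raising of Diamond-Taylor type at $\id{q}$ (attainable because, after possibly climbing to a single auxiliary prime, the requisite congruence condition is satisfied), pass to a second compatible system whose local component at $\id{q}$ is the fixed MGD supercuspidal type. This first congruence is MLT because residually one sits in the Steinberg/supercuspidal lifting regime covered by Section~\ref{sec:MLT} and the image is already dihedral-big. With the MGD type now locked in, perform a finite sequence of weight-change congruences: at each step, use a prime $\ell$ where the residual representation lies in a Hida family (or a companion-form type congruence, cf.\ the lemma proved by Diamond) to produce a new compatible system whose Hodge-Tate weights at $\ell$ have been lowered by $2$, while preserving the MGD type at $\id{q}$. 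The MGD property guarantees the MLT hypothesis at every intermediate residual prime. Iterating, we reach parallel weight $2$. Finally, apply the analog of Skinner--Wiles/Jarvis--Rajaei level-lowering (in the form used in Section~\ref{sec:killlevel}) to remove every prime of ramification except $p$ and $\id{q}$; the extra factor of $p$ in the level is introduced at the step where one congruence-raises at $p$ in order to manufacture Steinberg type there and kill, via the MGD machinery, the remaining ramified primes. At each stage the MGD prime $\id{q}$ forces MLT to hold. The endpoint is a strictly compatible system $\rho_{f_\infty,\lambda}$, residually congruent to the previous system, attached to a Hilbert newform $f_\infty$ of parallel weight $2$ and level $\Gamma_0(p\id{q}^2)$. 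The same construction, using the same $(p,\id{q})$, produces $g_\infty$.

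The hard part, as always in this circle of ideas, will be to ensure that every single congruence in the chains is in fact an MLT congruence; this means maintaining absolute irreducibility and sufficiently large (non-solvable, non-badly-dihedral, correct Serre weights) residual image at \emph{every} intermediate prime, not just at the two endpoints. The Micro Good Dihedral prime $\id{q}$ is precisely the mechanism for this: because the local type at $\id{q}$ is preserved along the chain (all congruences are taken at primes different from $q$, and the local inertia at $\id{q}$ is a supercuspidal induction from a character whose order rules out small residual images), the residual image automatically contains the induced dihedral subgroup. Residually reducible intermediate steps, should they appear during level-lowering, are handled by the Skinner--Wiles MLT as in \cite{SW}. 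The weight-change stage relies on the modularity-preserving congruences established in Section~\ref{sec:abstract} together with Lemma~\ref{lemma:Diamond}; the level-change stage relies on the generalized Mazur connectedness proved in Section~\ref{sec:killlevel}, which is an extension of Theorem~\ref{thm:Mazur} providing the needed congruences within a fixed space $S_2(\Gamma_0(p\id{q}^2))$ whenever the Hecke algebra is connected (and the MGD trick guarantees this connectedness in the relevant component).
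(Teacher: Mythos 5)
Your overall skeleton is right: build two independent chains, attach compatible systems, introduce an auxiliary supercuspidal prime $\id{q}$ common to both systems, drive everything down to parallel weight $2$ and level supported at $\id{q}$ and one auxiliary split prime $p$. However, several of the mechanisms you propose are not the ones the paper uses, and some would not work as stated.

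First, you consistently invoke ``Micro Good Dihedral'' primes, but the paper uses an ordinary \emph{good dihedral} prime $\id{q}$ (Proposition~\ref{prop:gooddihedral}), constructed via Tchebotarev so that $q\equiv -1\pmod p$ for a large $p$ split in the relevant compositum --- this $q$ is \emph{large} and not freely chosen. Micro good dihedral primes (small, with extra splitting constraints) are explicitly deferred in Section~\ref{sec:last} to future work precisely because of the technicalities you gloss over. In particular you cannot simply prescribe $q$ ``large and totally split in $F$'' and then impose the good-dihedral type; the precise congruence and splitting conditions in Proposition~\ref{prop:gooddihedral} are what guarantee the type can be inserted by raising the level.

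Second, your weight-change step is not what happens, and would not be covered by the paper's toolbox. You describe a ladder of congruences each lowering Hodge--Tate weights by $2$ via Hida families or ``companion-form type congruences, cf.\ the lemma proved by Diamond.'' Lemma~\ref{lemma:Diamond} is about removing $p$-power ramification from the level (a weight-$\mapsto$-level trick in the style of \cite{BDJ}), not about dropping the parallel weight by $2$. The paper's weight change is done in one stroke: Proposition~\ref{prop:parallel2} reduces modulo a single auxiliary large split prime and appeals to Snowden's lifting theorem for a weight-$2$ lift; this is applied at the beginning to put both systems in parallel weight $2$, and again at the very end of the proof of Theorem~\ref{thm:main} (this time at the fixed split prime $p$) because the level-killing steps destroy the weight. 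Your ladder would require a chain of MLT congruences in weights and characteristics that the paper does not control.

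Third, Mazur's connectedness theorem plays no role in the proof of Theorem~\ref{thm:main}: that theorem is proved in Section~\ref{sec:abstract} and only supplies Theorem~\ref{thm:Mazur}, which constructs chains \emph{without} the MLT guarantee. Theorem~\ref{thm:main} deliberately stops at producing $f_\infty$ and $g_\infty$ at a common level and weight; it does not claim to connect them inside $S_2(\Gamma_0(p\id{q}^2))$. Related to this, you propose Skinner--Wiles to handle residually reducible intermediate steps, but the whole point of the good dihedral prime is to rule these out; the paper mentions Skinner--Wiles only as a possible ingredient for a future, stronger result. Finally, the actual level-killing in Section~\ref{sec:killlevel} is a careful three-stage argument (kill tamely ramified principal-series/supercuspidal types; kill wild ramification by passing to conductor exponent $1$ at $t\mid \id{t}$ and twisting; move then remove Steinberg primes via Lemma~\ref{lemma:Diamond}, MLT1, and MLT4) --- not a single ``Skinner--Wiles/Jarvis--Rajaei'' appeal as you suggest.
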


\begin{remark}
  The importance of our main result is that we can translate
  Conjecture~\ref{conj:p1pnmod} to a situation where the level and the
  weight are known. Some of the primes involved are not explicit, since they
  come from some application of Tchebotarev density theorem. See Section \ref{sec:last}.
\end{remark}


\section{MLT theorems used}
\label{sec:MLT}
In this section we just enumerate the MLT theorems that will be used
during this work. By $E$ we denote a finite extension of $\Q_\ell$.

\begin{thm}[MLT1]
Let $F$ be a totally real number field, and $\ell \ge 5$ a prime number
which splits completely in $F$. Let $\rho:G_F \to \GL_2(E)$ be a
continuous irreducible representation such that:
\begin{itemize}
\item $\rho$ ramifies only at finitely many primes,
\item $\overline{\rho}$ is odd,
\item $\rho|_{G_{F_v}}$ is potentially semi-stable for any $v \mid \ell$ with different
  Hodge-Tate weights.
\item The restriction $\overline{\rho}|_{G_{F(\xi_\ell)}}$ is absolutely irreducible.
\item $\bar{\rho} \sim \bar{\rho_f}$ for a Hilbert modular form $f$.
\end{itemize}
 Then $\rho$ is automorphic.
\label{thm:MLT1}
\end{thm}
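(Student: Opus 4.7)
The plan is to prove this via the Taylor--Wiles--Kisin patching method adapted to the Hilbert modular setting, following the blueprint of Kisin, Gee, and their collaborators. I would begin with a base-change reduction: using solvable cyclic base change for $\GL_2$ (Langlands), together with the descent of automorphy along solvable totally real extensions, I may replace $F$ by a suitable solvable totally real extension $F'/F$ in which all the ramification of $\rho$ becomes tame and of prescribed inertial type, and in which we can choose an auxiliary place to force the quaternion algebra used on the automorphic side to be definite (ramified at all infinite places). If $\rho|_{G_{F'}}$ is shown to be automorphic we are done, since the big-image hypothesis $\overline{\rho}|_{G_{F(\xi_\ell)}}$ absolutely irreducible survives the base change (for $\ell \geq 5$), and automorphic descent returns automorphy of $\rho$ over $F$ itself.

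Next I would set up the global deformation theory. For each place $v$ I fix a local deformation condition matching $\rho|_{G_{F_v}}$: at $v \mid \ell$, Kisin's potentially semi-stable deformation ring at the prescribed Hodge--Tate weights and inertial type (here the hypothesis that $\ell$ splits completely in $F$ is crucial, as it makes each completion $F_v = \QQ_\ell$ and puts us in the range where the local deformation rings are well understood and have the expected dimension); at tamely ramified $v \nmid \ell$, the appropriate inertial-type deformation ring; at unramified $v$, the unramified condition. This yields a global framed deformation ring $R^{\square}_{\mathcal{S}}$. On the automorphic side, using the residual modularity hypothesis $\overline{\rho} \sim \overline{\rho}_f$, I Jacquet--Langlands transfer $f$ to a definite quaternion algebra $D/F'$ and form the localized Hecke algebra $\TT^{\square}_{\mathcal{S}}$ acting on the space of algebraic modular forms on $D^\times$ of the matching level and weight, completed at the maximal ideal of $\overline{\rho}$. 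The Eichler--Shimura-type Galois representations attached to eigenforms on $D^\times$ produce a surjection $R^{\square}_{\mathcal{S}} \twoheadrightarrow \TT^{\square}_{\mathcal{S}}$.

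The core of the proof is then the Taylor--Wiles--Kisin patching argument. For each $n$ I produce a finite set $Q_n$ of Taylor--Wiles primes: places $v$ of $F'$, unramified for $\overline{\rho}$, with $N(v) \equiv 1 \pmod{\ell^n}$ and $\overline{\rho}(\Frob_v)$ having distinct eigenvalues, of size equal to the dimension of a dual Selmer group. The hypothesis that $\overline{\rho}|_{G_{F'(\xi_\ell)}}$ is absolutely irreducible, combined with a Chebotarev argument, ensures such sets exist with the required bounds on Selmer ranks. Patching the auxiliary deformation rings $R^{\square}_{\mathcal{S} \cup Q_n}$ and the corresponding Hecke modules $M_{\mathcal{S} \cup Q_n}$ in the standard manner yields a module $M_\infty$ over a power series ring $R_\infty$ over the completed tensor product of the local deformation rings. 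A dimension count (using Kisin's calculation of the generic fibers of the local deformation rings) shows $M_\infty$ is maximal Cohen--Macaulay and has full support over $R_\infty$; by a Diamond--Fujiwara-style commutative algebra argument this forces $R^{\square}_{\mathcal{S}} \twoheadrightarrow \TT^{\square}_{\mathcal{S}}$ to be an isomorphism after inverting $\ell$, so the $E$-point of $R^{\square}_{\mathcal{S}}$ given by $\rho$ is automorphic.

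The main obstacle will be the local analysis at places $v \mid \ell$: one needs Kisin's description of the potentially semi-stable deformation ring with the correct Hodge--Tate weights and inertial type, including the fact that its generic fiber is formally smooth of the expected relative dimension so that the dimension count in patching comes out right. Here the assumption that $\ell \geq 5$ splits completely in $F$ is doing essential work, placing us safely in the setting where these local rings have been computed and the Breuil--M\'ezard type geometry is under control. A secondary subtlety is upgrading absolute irreducibility of $\overline{\rho}|_{G_{F(\xi_\ell)}}$ to the adequacy hypothesis required by modern patching formalism, which for $\ell \geq 5$ is automatic by results of Guralnick--Herzig--Taylor--Thorne on subgroups of $\GL_2$.
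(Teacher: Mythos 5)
The paper does not prove this statement: it is simply cited as Theorem~6.4 of the reference \cite{Hu-Tan}, so there is no ``paper's proof'' to compare against beyond that attribution. Your Taylor--Wiles--Kisin sketch is the right general framework in which such a theorem is established, but it has a genuine gap at the decisive step. You write that a dimension count, using Kisin's computation of the generic fibres of the local potentially semi-stable deformation rings, shows that $M_\infty$ is maximal Cohen--Macaulay with \emph{full support} over $R_\infty$. A dimension count shows only that $\operatorname{Supp} M_\infty$ is a union of irreducible components of $\operatorname{Spec} R_\infty$ of maximal dimension; it does not by itself show that the particular component containing the $E$-point determined by $\rho$ (i.e.\ the component of the prescribed inertial and Hodge type at places above $\ell$) lies in the support. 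Bridging this is exactly the content of the Breuil--M\'ezard conjecture, which identifies the Hilbert--Samuel multiplicities of the special fibres of Kisin's local deformation rings with representation-theoretic multiplicities and thereby lets one transfer support between components. Kisin's modularity argument proceeds this way in the generic case, and the point of the cited Hu--Tan paper is precisely to establish the Breuil--M\'ezard statement (and hence the lifting theorem) in the remaining split, non-scalar residual cases at $\ell$; omitting this ingredient leaves the argument incomplete in exactly the cases the hypotheses of MLT1 are designed to cover.

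A secondary imprecision: you assert that absolute irreducibility of $\overline{\rho}|_{G_{F(\xi_\ell)}}$ ``survives'' solvable base change to $F'$ for $\ell \geq 5$. This is not automatic for an arbitrary solvable totally real $F'$; one must choose $F'$ so that the image of $\overline{\rho}|_{G_{F'(\xi_\ell)}}$ does not degenerate, e.g.\ by taking $F'$ linearly disjoint from the fixed field of $\ker(\operatorname{proj}\overline{\rho})$ over $F(\xi_\ell)$. This is standard but needs to be stated, since otherwise the Taylor--Wiles prime production over $F'$ breaks down. Finally, note that when $\rho$ is potentially semi-stable but not potentially crystalline, or when the local type of $\rho$ away from $\ell$ differs from that of the chosen modular lift, one typically also needs Taylor's Ihara-avoidance argument to compare supports across the two patched modules; your outline does not mention it.
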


\begin{proof}
This is Theorem 6.4 of \cite{Hu-Tan}.
\end{proof}

\begin{thm}[MLT2 - ordinary case] Let $F$ be a totally real field, $\ell$
  and odd prime and $\rho:G_F \to \GL_2(E)$ be a continuous
  irreducible representation such that:
  \begin{itemize}
  \item $\rho$ is unramified at all but finitely many primes.
  \item $\rho$ is de Rham at all primes above $\ell$.
  \item The reduction $\bar{\rho}$ is irreducible and
    $\bar{\rho}(G_{F(\xi_\ell)})\subset \GL_2(\overline{\FF_\ell})$ is adequate.
  \item $\rho$ is ordinary at all primes above $\ell$.
  \item $\bar{\rho}$ is ordinarily automorphic.
  \end{itemize}
  Then $\rho$ is ordinarily automorphic. If $\rho$ is also crystalline
  (resp. potentially crystalline), then $\rho$ is ordinarily
  automorphic of level prime to $\ell$ (resp. potentially level prime to
  $\ell$).
\label{thm:MLT2}
\end{thm}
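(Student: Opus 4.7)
The plan is to treat $f$ and $g$ symmetrically: it suffices to show that from any modular residual representation $\rho_f$ one can build a chain of strictly compatible systems, linked by MLT congruences, whose last term realizes a Hilbert modular form of parallel weight $2$ and level $\Gamma_0(p\id{q}^2)$ for some prescribed auxiliary data $(p,\id{q})$. Running the argument twice (for $f$ and for $g$) and arranging that the same pair $(p,\id{q})$ is used in both runs yields the theorem. The first move is to embed $\rho_f$ in a strictly compatible system by the results of Snowden cited in the introduction, which gives $\ell$-adic realizations available for any auxiliary prime.

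Next I would reduce to parallel weight $2$. Choose a rational prime $p$ that splits completely in $F$ and is large enough so that Fontaine--Laffaille / crystalline technology applies, and such that the reduction of $\rho_f$ modulo some $\ell$-adic realization of the compatible system is crystalline at the primes above $p$ with small Hodge--Tate weights. A congruence mod a prime $\lambda \mid p$ then lets one pass to a compatible system whose $p$-adic realization has Hodge--Tate weights $(0,1)$ at every place above $p$, i.e.\ to parallel weight $2$. Because $p$ splits completely in $F$, Theorem~\ref{thm:MLT1} applies in both directions, so this congruence is MLT. Mazur's Theorem~\ref{thm:Mazur} is used at fixed level as a connectivity engine between the intermediate weights, so the whole weight reduction is a chain of MLT congruences.

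For the level I first introduce a supercuspidal ``micro good dihedral'' prime $\id{q}^2$ and then trim everything else. Using Chebotarev density, pick a prime $\id{q}\subset\Om_F$ with prescribed Frobenius so that there is an auxiliary prime $\ell_1$ and a character of order $\ell_1$ of $G_{F_{\id{q}}}$ whose induction gives a supercuspidal type of conductor $\id{q}^2$ congruent modulo $\ell_1$ to $\rho_f|_{G_{F_{\id{q}}}}$ (up to twist). A level-raising congruence mod $\ell_1$ then provides a compatible system realizing this supercuspidal at $\id{q}$. The rigidity of the supercuspidal type lets one next remove the ramification at any prime outside $\{p,\id{q}\}$ by iterated MLT congruences, invoking Theorem~\ref{thm:MLT2} in the ordinary or potentially crystalline steps, and Theorem~\ref{thm:Mazur} again at fixed level to chain intermediate newforms. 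Repeating everything for $g$ with the same $(p,\id{q})$ delivers $g_\infty$ of the required level and weight.

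The main obstacle is verifying the MLT hypotheses at every single congruence: residual irreducibility on $G_{F(\xi_\ell)}$, adequacy of the residual image, the ordinarity assumption of Theorem~\ref{thm:MLT2} in the level-trimming steps, and preservation of big image across both weight and level modifications. The choice of the micro good dihedral prime $\id{q}$ is the tool that controls these conditions uniformly: by fixing a suitable residual image at $\id{q}$ one prevents the image from degenerating (e.g.\ becoming dihedral in the wrong way, or reducible) during the weight-reduction and level-trimming stages, so that all congruences in the chain remain MLT.
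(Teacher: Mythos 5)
Your proposal does not address the statement at hand. Theorem~\ref{thm:MLT2} (MLT2, the ordinary case) is a modularity lifting theorem: given a $p$-adic representation $\rho$ that is ordinary at all places above $\ell$, de Rham, with suitable residual irreducibility and adequacy, and whose reduction is ordinarily automorphic, one concludes that $\rho$ itself is ordinarily automorphic, with the expected control on the level when $\rho$ is (potentially) crystalline. In the paper this is not proved from scratch at all; it is a direct citation, namely Theorem~2.4.1 of~\cite{BLGGT2}, and the entire ``proof'' is that one sentence.

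What you have written is instead a high-level sketch of Theorem~\ref{thm:main}: you fix two newforms $f$ and $g$, build chains of strictly compatible systems, introduce a supercuspidal auxiliary prime $\id{q}$, reduce to parallel weight~$2$ at a split prime $p$, and invoke Mazur connectedness. That is an outline of the paper's main result, not of the lifting theorem MLT2. Worse, your argument explicitly \emph{invokes} Theorem~\ref{thm:MLT2} (``invoking Theorem~\ref{thm:MLT2} in the ordinary or potentially crystalline steps''), so as a putative proof of MLT2 it would be circular. There is no route to MLT2 through congruence chains, good dihedral primes, or Mazur's theorem: MLT2 is itself the engine that makes such congruences propagate automorphy, and it is established by Taylor--Wiles--Kisin patching in~\cite{BLGGT2}, a technique of an entirely different nature from what you describe. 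The correct response here is simply to identify the statement as the ordinary automorphy lifting theorem of Barnet-Lamb, Gee, Geraghty, and Taylor and cite it.
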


\begin{proof}
  This is just Theorem $2.4.1$ of \cite{BLGGT2}.
\end{proof}

\begin{thm}[MLT3 - pot. diagonalizable case] Let $F$ be a totally real field, $\ell \ge 7$
be  and odd prime and $\rho:G_F \to \GL_2(E)$ be a continuous
  irreducible representation such that:
  \begin{itemize}
  \item $\rho$ is unramified at all but finitely many primes.
  \item $\rho$ is de Rham at all primes above $\ell$, with different Hodge-Tate numbers.
  \item $\rho|_{G_{F_{\lambda}}}$ is potentially diagonalizable for all $\lambda \mid \ell$.
  \item The restriction $\bar{\rho}|_{G_{F(\xi_\ell)}}$ is irreducible.
  \item $\bar{\rho}$ is either ordinarily automorphic or potentially diagonalizable automorphic.
  \end{itemize}
  Then $\rho$ is potentially diagonalizable automorphic (of level potentially prime to $\ell$).
\label{thm:MLT3}
\end{thm}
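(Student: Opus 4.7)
The plan is to deduce MLT3 as a direct consequence of Theorem 2.4.1 of \cite{BLGGT2}, which is the modularity lifting theorem of Barnet-Lamb, Gee, Geraghty and Taylor refined to incorporate the notion of potential diagonalizability. The real work is not to reprove the patching argument but to verify that the hypotheses match and that the two-dimensional totally real setting is covered by the (more general) $n$-dimensional polarizable setting of that paper.

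First I would check the ``adequacy'' condition that appears in \cite{BLGGT2}. For two-dimensional residual representations in characteristic $\ell \ge 7$, the results of Guralnick--Herzig--Taylor--Thorne imply that absolute irreducibility of $\bar{\rho}|_{G_{F(\xi_\ell)}}$ (our fourth bullet, combined with $\ell \ge 7$) already forces the image to be adequate in $\GL_2(\overline{\F}_\ell)$. The remaining hypotheses --- unramified outside a finite set, de Rham at primes above $\ell$ with distinct Hodge--Tate weights, potential diagonalizability at each $\lambda \mid \ell$, and residual automorphy of one of the two required types --- match verbatim the hypotheses of the cited theorem, so no extra argument is needed there.

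Second, because \cite{BLGGT2} is phrased in terms of polarized (essentially conjugate self-dual) representations, one must translate between the totally real $\GL_2$ picture used here and that setting. The dictionary is standard: a Hilbert modular newform $f$ of parallel-ish weight over $F$ corresponds, after a suitable cyclotomic twist, to a regular algebraic essentially self-dual cuspidal automorphic representation of $\GL_2/F$, and the automorphy we wish to conclude for $\rho$ is equivalent to the polarized automorphy produced by Theorem 2.4.1. Once this identification is fixed, the conclusion --- that $\rho$ is potentially diagonalizably automorphic, of level potentially prime to $\ell$ --- follows, the ``potentially prime to $\ell$'' statement coming from the fact that the soluble totally real extension introduced inside the BLGGT patching can be chosen so that $\rho$ becomes crystalline above $\ell$.

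The main obstacle I expect is bookkeeping rather than a substantive mathematical gap: one must carefully track the soluble base change used in \cite{BLGGT2} and descend via cyclic base change of Arthur--Clozel type so that what is really produced is potential, rather than honest, automorphy over $F$; and one must check that the notion of ``potentially diagonalizable automorphic'' used as a residual hypothesis in our statement matches the ``connected component'' hypothesis used in the statement of Theorem 2.4.1 of \cite{BLGGT2}. Since these verifications are routine given the setup of that paper, I would present the proof, as in the excerpt, simply as a citation once the translation above has been made explicit.
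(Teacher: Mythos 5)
The main issue is a misattribution that is more than a typo: you cite Theorem~$2.4.1$ of \cite{BLGGT2}, but that is the \emph{ordinary} automorphy lifting theorem (and is in fact what the paper uses for MLT2). Its hypotheses require $\rho$ to be ordinary at every place above $\ell$, which is not among the assumptions of MLT3; MLT3 instead assumes potential diagonalizability at each $\lambda \mid \ell$. The correct source, and the one the paper actually cites, is Theorem~$4.2.1$ of \cite{BLGGT2}, which is the potential-diagonalizability version. Because potential diagonalizability is strictly weaker than (potential) ordinarity, Theorem~$2.4.1$ simply does not apply under the stated hypotheses, so the proof as written has a gap: you would need to replace the citation and reverify that the five bullets match the hypotheses of Theorem~$4.2.1$ rather than $2.4.1$.

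Apart from the citation, the rest of your outline follows the same route as the paper: adequacy from irreducibility of $\bar{\rho}|_{G_{F(\xi_\ell)}}$ when $\ell \ge 7$, and passing from the CM/polarized setting of \cite{BLGGT2} to totally real $F$ by choosing a suitable CM extension and descending via solvable base change. That descent step is exactly the one the paper flags in its one-line remark, so once the theorem number is corrected your argument is essentially the paper's.
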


\begin{proof}
  This is Theorem $4.2.1$ of \cite{BLGGT2}. Note that although it is
  stated only for CM fields, one can chose a suitable CM extension and
  get the same result for totally real fields using solvable base change.
\end{proof}

\begin{remark}
  The hardest condition to check in the previous Theorem is that of
  $\rho$ being potentially diagonalizable, but this is satisfied if,
  in particular, $\ell$ is unramified in $F$, $\rho$ is crystalline at
  all primes above $\ell$ and the Hodge-Tate weights are in the
  Fontaine-Laffaille interval.
\end{remark}

 \begin{remark}
   The last result is stated for $\ell \ge 7$, BUT if $\ell$ is odd and $\overline{\rho}|G_{F(\xi_\ell)}$ is adequate, the same holds (see \cite{DiGe}).
 \end{remark}

 We also need a mixed variant. Recall the following property.

\begin{lemma}
  Let $F$ be a totally real field, and $\{\rho_{\lambda}\}$ be a
  strictly compatible system of continuous, odd, irreducible, parallel
  weight $2$ representations, then $\rho_{\lambda}$ is potentially
  Barsotti-Tate or ordinary at $\lambda$.
\label{lemma:typesatp}
\end{lemma}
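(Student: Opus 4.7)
The plan is to carry out a purely local analysis at each place $v \mid \ell$ of $F$, where $\ell$ is the residue characteristic of $\lambda$. Since $\{\rho_\lambda\}$ is a strictly compatible system of parallel weight $2$, the restriction $\rho_\lambda|_{G_{F_v}}$ is potentially semistable with Hodge-Tate weights $\{0,1\}$ at every embedding $F_v \hookrightarrow \overline{\QQ}_\ell$; this is the only input I take from the hypotheses. The dichotomy in the conclusion is then governed by the monodromy operator $N$ acting on $D_{\mathrm{pst}}(\rho_\lambda|_{G_{F_v}})$.

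If $N=0$ at every $v \mid \ell$, then $\rho_\lambda|_{G_{F_v}}$ is potentially crystalline with Hodge-Tate weights in $\{0,1\}$ at all embeddings, which is literally the definition of potentially Barsotti-Tate; this case is immediate. If instead $N\neq 0$ at some $v \mid \ell$, I would pass to a finite extension $L/F_v$ over which $\rho_\lambda|_{G_L}$ becomes semistable, and then invoke the classical classification of $2$-dimensional semistable non-crystalline filtered $(\varphi,N)$-modules with Hodge-Tate weights $\{0,1\}$: up to an unramified twist, such a module corresponds to a non-split extension of the trivial character by the cyclotomic character. In particular, $\rho_\lambda|_{G_{F_v}}$ is reducible, with the Hodge-Tate weight $1$ character appearing as a subrepresentation, which is precisely the condition of being ordinary at $v$.

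There is no serious obstacle here: the proof amounts to standard $p$-adic Hodge theory bookkeeping, the classification used in the second case being due to Fontaine (and equivalently recoverable from Kisin's analysis of Barsotti-Tate deformation rings). The only minor point I would take care to verify is that the ``parallel'' hypothesis on the Hodge-Tate weights really allows the argument to be applied uniformly across all embeddings $F_v \hookrightarrow \overline{\QQ}_\ell$, so that one never lands in a mixed situation where the representation is partially Barsotti-Tate and partially ordinary in an incompatible way.
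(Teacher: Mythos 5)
Your proof is correct but takes a genuinely different route from the paper. The paper's argument is global: it invokes potential modularity to realize the system, after base change, in the Tate module of an abelian variety with potentially semistable reduction, i.e.\ attached to a parallel weight~$2$ Hilbert modular form that is Steinberg at $\lambda$, and then reads off ordinariness from the known local shape at a Steinberg prime. Your argument is purely local: strict compatibility makes $\rho_\lambda|_{G_{F_v}}$ de Rham (hence potentially semistable, by Berger's $p$-adic monodromy theorem) with Hodge--Tate weights $\{0,1\}$ at every embedding, and the classification of rank-$2$ filtered $(\varphi,N)$-modules with those weights then yields the dichotomy: $N=0$ gives a potentially Barsotti--Tate representation, while $N\neq 0$ gives, up to unramified twist and after descending from a semistabilizing extension $L/F_v$, a non-split extension of the trivial character by the cyclotomic character, which is ordinariness. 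The one step you leave implicit is the descent from $G_L$ back to $G_{F_v}$: the Hodge--Tate-weight-$1$ line of $D_{\mathrm{st},L}$ is $\im(N)$, which is $\Gal(L/F_v)$-stable, so the subrepresentation of $\rho_\lambda|_{G_L}$ descends to a subrepresentation of $\rho_\lambda|_{G_{F_v}}$; this is standard but worth spelling out. Your closing worry about mixed behavior is a non-issue: the dichotomy is applied place by place at the $v\mid\ell$, and the lemma (and the subsequent Assumption MLT4) are designed precisely to tolerate some $v$ being potentially Barsotti--Tate and others ordinary. On balance your route is more elementary and self-contained; the paper's is heavier but matches its overall strategy of threading everything through potential modularity.
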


\begin{proof}
  If the restriction to $\lambda$ is not potentially crystalline, then
  it is potentially semistable non-crystalline, in which case
  ordinariness is known. In fact, using potential modularity and the
  semistability at $\lambda$, the system is known to correspond to an
  abelian variety with potentially semistable reduction at $\lambda$,
  and potentially (over a suitable extension) to a parallel weight $2$
  Hilbert modular form which is Steinberg at $\lambda$.
\end{proof}

The following variant of the current MLT is not yet known:

\begin{ass}[MLT4 - mixed case] Let $F$ be a totally real field, $\ell$
  be and odd prime and $\rho:G_F \to \GL_2(E)$ be a continuous
  irreducible representation such that:
  \begin{itemize}
  \item $\rho$ is unramified at all but finitely many primes.
  \item $\rho$ is de Rham at all primes above $\ell$, with different Hodge-Tate numbers.
  \item $\rho|G_{F_{\lambda}}$ is potentially diagonalizable for some $\lambda \mid \ell$ and is ordinary at the others.
  \item The restriction $\bar{\rho}|_{G_{F(\xi_\ell)}}$ is irreducible and adequate.
  \item $\bar{\rho}$ is either ordinarily automorphic or potentially diagonalizably automorphic at the same places as $\bar{\rho}$.
  \end{itemize}
  Then $\rho$ is potentially automorphic (of level potentially prime to $\ell$).
\label{ass:MLT4}
\end{ass}

\begin{remark}
  We believe that using the tools developed in \cite{BLGGT2} this
  result is accessible. Not only we consider the above result
  accessible, but also if we restrict to the case where both Galois
  representations involved are of parallel weight $2$ (thus, because
  of the previous Lemma, locally potentially Barsotti-Tate or ordinary
  at all primes above $p$), it is a Theorem as proved in \cite{BD},
  Theorem 3.2.2.
\end{remark}

\begin{remark}
  We will need also need Assumption~\ref{ass:MLT4} to hold for $p=2$
  and parallel weight $2$.
\end{remark}

\begin{remark}
  To apply the previous MLT's we need
  $\overline{\rho}(G_{F(\xi_\ell)})$ to be irreducible and to be
  adequate. 
  When $\ell \ge 7$, adequacy is equivalent to irreducibility, so this
  imposes no extra hypothesis. Nevertheless, when $\ell = 3$ and $\ell
  = 5$ this is not the case. Theorem 1.5 of \cite{Gur} implies that if
  $\ell \ge 3$ and $\SL_2(\FF_{p^r}) \subset \im(\overline{\rho})$ for
  $r>1$ then the residual image of the representation (and the same for the image of
   its restriction to any abelian extension)
   is adequate as well. In what follows, we will only apply any of the previous MLT's in characteristics $3$ or $5$
AFTER having added a good dihedral
  prime (which is added modulo a prime greater than $5$)
   and from this it follows that this result of Guralnick can be
   applied ensuring adequacy
  (see \cite{LuisBC2} for more details).

\end{remark}

\section{Abstract representations:}\label{sec:abstract}

As mentioned in the introduction, most of the level/weight
manipulations that we are going to perform, work not only for modular
representation but for abstract ones as well. In this section we
will work in the greatest generality possible, and in the next
section we will restrict to modular representations putting emphasis
on the results that are nowadays only known for modular
representations. The results in this section are enough to prove
Mazur connectedness Theorem. We begin by recalling the following
well-known definition (see \cite{serre-book} for example). If
$\lambda_i$ is a prime in a number field $K$, we denote by $\ell_i$
the rational prime below $\lambda_i$ and by $L_{\ell_i}$ the set of primes in
$K$ dividing $\ell_i$ (which clearly contains $\lambda_i$).

\begin{defi}
  A \emph{compatible system} of Galois representations over $F$
  is a family of continuous Galois representations
\[
\rho_\lambda : \Gal(\overline{F}/F) \to \GL_2(K_\lambda),
\]
where $K$ is a finite extension of $\Q$ and $\lambda$ runs through
the prime ideals of $\Om_K$, which satisfy:
\begin{enumerate}
\item There exists a finite set of primes $S$ (independent of
  $\lambda$) such that $\rho_\lambda$ is unramified outside $S \cup
  L_\ell$.
\item For each pair of prime ideals $(\lambda_1,\lambda_2)$ in $\Om_K$
  and for each prime ideal $\id{p} \not \in S \cup
  L_{\ell_1} \cup L_{\ell_2}$, the characteristic polynomials
  $Q_{\id{p}}(x)$ of $\rho_{\lambda_i}(\Frob_{\id{p}})$ lie in $K[x]$
  and are equal.
\end{enumerate}
\end{defi}

The most important examples of such families are the ones arising
from the \'etale cohomology of a variety defined over $F$. In this
case we also have some control on the roots of the characteristic
polynomials, and some control on the $\lambda$-adic representation
at primes in $L$. This motivate the following definition (see
\cite{BLGGT2}, Section 5).

\begin{defi}
  A rank $2$ \emph{strictly compatible system} of Galois
  representations $\R$ of $G_F$ defined over $K$ is a $5$-tuple
\[
\R=(K,S,\{Q_{\id{p}}(x)\},\{\rho_\lambda\},\{H_\tau\}),
\]
where
\begin{enumerate}
\item $K$ is a number field.
\item $S$ is a finite set of primes of $F$.
\item for each prime $\id{p} \not \in S$, $Q_{\id{p}}(x)$ is a degree
  $2$ polynomial in $K[x]$.
\item For each prime $\lambda$ of $K$, the representation
\[
\rho_\lambda:G_F \to \GL_2(K_\lambda),
\]
is a continuous semi-simple representation such that:
\begin{itemize}
\item If $\id{p} \not \in S$ and $\id{p} \nmid \ell$, then
  $\rho_\lambda$ is unramified at $\id{p}$ and
  $\rho_\lambda(\Frob_{\id{p}})$ has characteristic polynomial
  $Q_{\id{p}}(x)$.
\item If $\id{p} \mid \ell$, then $\rho|_{G_{F_{\id{p}}}}$ is de Rham and in the case $\id{p} \notin S$, crystalline.
\end{itemize}
\item for $\tau:F \hookrightarrow \overline{K}$, $H_\tau$  contains $2$
  different integers such that for any $\overline{K} \hookrightarrow
  \overline{K}_\lambda$ over $K$, we have that
  $\HT_\tau(\rho_\lambda)=H_\tau$.
\item For each finite place $\id{p}$ of $F$ there exists a Weil-Deligne
  representation $\WD_{\id{p}}(\R)$ of $W_{F_{\id{p}}}$ over $\overline{K}$
  such that for each place $\lambda$ of $K$ not dividing the residue
  characteristic of $\id{p}$ and every $K$-linear embedding
  $\iota:\overline{K} \hookrightarrow \overline{K}_\lambda$, the push forward
  $\iota\WD_{\id{p}}(\R) \simeq
  \WD(\rho_\lambda|_{G_{F_{\id{p}}}})^{K\text{-ss}}$.
\end{enumerate}
\end{defi}

\begin{remark}
  If one starts with a $2$-dimensional continuous, odd, Galois
  representations over a totally real number field $F$, under some
  minor hypothesis (which are exactly the hypothesis for an MLT
  theorem to hold), one can prove that such representations is
  potentially modular. In particular, this implies that the
  representations is part of a strictly compatible system. Since all
  the congruences we will work with are where an MLT theorem works in
  both directions, without loss of generality, we will assume that all
  the representations come in strictly compatible systems.
\end{remark}

\begin{defi}
  Let $\{\rho_\lambda\}$ be a strictly compatible system of Galois
  representations. We say that the system is \emph{dihedral} if the
  images are compatible dihedral groups, i.e. if  there exists a
  quadratic extension $L/F$ (independent of $\lambda$) such that
  $\rho_\lambda$ is induced from a $\lambda$-adic character of $L$.
\end{defi}

\begin{lemma}
  The family $\{\rho_\lambda\}$ is dihedral if and only if one
  representation is dihedral.
\end{lemma}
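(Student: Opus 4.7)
The forward direction is immediate: if the system is dihedral in the sense of the preceding definition, then every $\rho_\lambda$ is induced from a character of $G_L$ for one fixed quadratic extension $L/F$, so in particular any single $\rho_{\lambda_0}$ is dihedral.

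For the converse, suppose one representation $\rho_{\lambda_0} \simeq \Ind_{G_L}^{G_F}\chi$ for a quadratic extension $L/F$ and a $\lambda_0$-adic character $\chi$ of $G_L$, and let $\varepsilon \colon G_F \to \{\pm 1\}$ be the quadratic character cutting out $L$. The plan is to show, for every $\lambda$, that $\rho_\lambda \simeq \rho_\lambda \otimes \varepsilon$, and then invoke the standard fact that for a 2-dimensional irreducible representation $\rho$ the condition $\rho \simeq \rho \otimes \varepsilon$ with $\varepsilon$ a nontrivial quadratic character cutting out $L$ forces $\rho \simeq \Ind_{G_L}^{G_F}\chi'$ for some character $\chi'$ of $G_L$ (one takes $\chi' = \chi|_{G_L}$ for any eigenline $\chi$ of $G_L$ inside $\rho$, noting that $G_F \setminus G_L$ swaps the two eigenlines).

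To establish $\rho_\lambda \simeq \rho_\lambda \otimes \varepsilon$, I would first observe that the induced structure of $\rho_{\lambda_0}$ forces $\trace \rho_{\lambda_0}(\Frob_{\id p}) = 0$ at every prime $\id p$ of $F$ that is inert in $L$ and lies outside the finite set $S \cup L_{\ell_0}$. By strict compatibility, the characteristic polynomials $Q_{\id p}(x) \in K[x]$ are independent of $\lambda$, so $\trace \rho_\lambda(\Frob_{\id p}) = 0$ at all such inert primes, for every $\lambda$. Tchebotarev density then supplies a set of Frobenius elements that is dense in the nontrivial coset $G_F \setminus G_L$, and continuity of $\rho_\lambda$ upgrades this to the identity $\trace(\rho_\lambda \otimes \varepsilon)(g) = \trace \rho_\lambda(g)$ on all of $G_F$. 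Since $\rho_\lambda$ is semisimple by the definition of a strictly compatible system, Brauer--Nesbitt gives $\rho_\lambda \simeq \rho_\lambda \otimes \varepsilon$, and the previous paragraph then finishes the argument with the \emph{same} quadratic extension $L$ for every $\lambda$.

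I do not anticipate any serious obstacle. The only slightly delicate point is the use of irreducibility of $\rho_\lambda$ to conclude it is induced; but even in the reducible semisimple case, the relation $\rho_\lambda \simeq \rho_\lambda \otimes \varepsilon$ already forces a splitting $\rho_\lambda \simeq \psi \oplus \psi\varepsilon \simeq \Ind_{G_L}^{G_F}(\psi|_{G_L})$ for some character $\psi$, so no separate treatment is required and the system is dihedral with the original $L$.
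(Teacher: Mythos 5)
Your proof is correct, and it takes a genuinely different route from the paper's. Both arguments turn on the same key observation --- from the induced shape of $\rho_{\lambda_0}$ the trace vanishes at primes inert in $L$, and strict compatibility propagates this vanishing to every $\rho_\lambda$ --- but they diverge after that. The paper first promotes the character $\chi_{\lambda_0}$ of $G_L$ to a strictly compatible system of characters $\{\chi_\lambda\}$ (an input from the theory of algebraic Hecke characters), and then checks directly that $\rho_\lambda$ and $\Ind_{G_L}^{G_F}\chi_\lambda$ have equal traces at all Frobenii, concluding by Brauer--Nesbitt. You instead avoid constructing a compatible system of characters altogether: you use the trace vanishing at inert primes, together with Tchebotarev and Brauer--Nesbitt, to establish the self-twist $\rho_\lambda \simeq \rho_\lambda \otimes \varepsilon$ for each $\lambda$, and then invoke the standard structural fact that a two-dimensional semisimple representation fixed by a nontrivial quadratic twist is induced from the corresponding quadratic extension. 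Your route is self-contained and more elementary in that it extracts the inducing character for each $\lambda$ from $\rho_\lambda$ itself, whereas the paper's version gives slightly more, namely that the inducing characters $\chi_\lambda$ themselves form a compatible system; for the definition of dihedral used in the paper (only $L$ is required to be independent of $\lambda$), either conclusion suffices. One small wording nit: ``any eigenline $\chi$ of $G_L$ inside $\rho$'' conflates the line with the character acting on it; what you mean is to pick a $G_L$-stable line and take $\chi'$ to be the character of $G_L$ on that line. The observation that the reducible semisimple case is subsumed (forcing $\rho_\lambda \simeq \psi \oplus \psi\varepsilon$) is correct and a nice addition, though the paper implicitly stays in the irreducible setting.
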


\begin{proof}
  It is clear that if the whole family is dihedral, in particular any
  of them is dihedral. For the converse, let $\lambda$ be a prime
  ideal of $\Om_F$, and suppose that $\rho_{\lambda_0}$ is
  dihedral. Then there exists a quadratic extension $L/F$ and a
  $\lambda$-adic character $\chi_{\lambda_0}:\Gal(\overline{L}/L) \to
  K_{\lambda_0}$ such that $\rho_{\lambda_0} = \Ind_{G_L}^{G_F}
  \chi_{\lambda_0}$. We know that $\chi_{\lambda_0}$ is part of a
  strictly compatible system of $1$-dimensional representations
  $\{\chi_{\lambda}\}$, so we are led to prove that $\rho_{\lambda}
  \simeq \Ind_{G_L}^{G_F} \chi_{\lambda}$ for all primes $\lambda
  \subset \Om_F$. This comes from a straightforward computation, since the
  values of the traces (an even the whole characteristic polynomial)
  of $\rho_{\lambda_0}(\Frob_{\id{p}})$ are given in terms of the
  values of $\chi_{\lambda_0}$. For split primes in the extension
  $L/F$, the trace of $\rho_{\lambda_0}(\Frob_{\id{p}})$ equals
  $-\chi_{\lambda_0}(\id{p}_1)-\chi_{\lambda_0}(\id{p}_2)$, where
  $\id{p}\Om_L = \id{p}_1 \id{p}_2$ and for inert primes, the trace is
  zero. In particular, the same happens to
  $\rho_{\lambda}(\Frob_{\id{p}})$, for all $\lambda$ so
  $\rho_{\lambda}$ and $\Ind_{G_L}^{G_F} \chi_{\lambda}$ have the same
  trace and are thus isomorphic.
\end{proof}

To apply most MLT's theorems we will need to have some control of
the image of our residual Galois representations. In particular we
will need the image of its restriction to a cyclotomic extension to
be adequate. To avoid checking this particular condition at each
step of our chain of congruences, we will move to families with
``big image'', in the sense that for all but finitely many primes
$p$ (and in most steps, for all primes of bounded size), the
residual representation has an image containing $\SL_2(\F_p)$.

\begin{prop}
  Let ${\rho_\lambda}$ be a strictly compatible system of odd dihedral
  representations. Then there exists a strictly compatible system of odd
  non-dihedral representations $\{\rho_{2,\lambda}\}$ and a prime
  $\id{p}$ such that $\rho_{\id{p}} \equiv \rho_{2,\id{p}} \pmod{p}$
  and MLT holds in both directions.
\label{prop:non-dihedral}
\end{prop}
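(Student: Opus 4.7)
The dihedral system $\{\rho_\lambda\}=\{\Ind_{G_L}^{G_F}\chi_\lambda\}$ arises from a CM Hilbert modular newform, so it is automorphic and Ribet-style level-raising techniques apply. The plan is to produce a congruent compatible system whose local type at an auxiliary prime $\id{q}$ is Steinberg, since a local Steinberg representation is a non-split extension of two characters, whereas the restriction to any decomposition group of an induction from a quadratic extension of $F$ is, by Mackey, always either completely reducible (a direct sum of characters) or irreducible. Consequently, introducing a Steinberg local type at one prime already rules out dihedrality.

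First I would choose a rational prime $p$ large enough so that $p\geq 7$, $p$ is unramified in $L/F$, $L\not\subset F(\xi_p)$, and $p$ lies outside the ramification set $S$ of the original system. For any prime $\id{p}\mid p$ in the coefficient field these conditions ensure that $\bar\rho_{\id{p}}$ is absolutely irreducible, that its restriction to $G_{F(\xi_p)}$ remains absolutely irreducible (hence adequate by the remark following Theorem~\ref{thm:MLT3}), and that $\rho_{\id{p}}$ meets the hypotheses of the MLT's of Section~\ref{sec:MLT}. By Chebotarev density I would then choose a prime $\id{q}\notin S$ of $F$, coprime to $p$, that is inert in $L/F$ and satisfies $N(\id{q})\equiv -1\pmod{p}$. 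Inertness forces $\bar\rho_{\id{p}}(\Frob_{\id{q}})$ to have trace zero, and together with $N(\id{q})\equiv -1\pmod p$ the characteristic polynomial of $\bar\rho_{\id{p}}(\Frob_{\id{q}})$ has the factorization $(X-\alpha)(X-\alpha N(\id{q}))$ required for Ribet's level-raising congruence at $\id{q}$ modulo $\id{p}$.

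By the level-raising theorem for Hilbert modular forms (Fujiwara, Rajaei, Jarvis), there exists a Hilbert modular newform $h$, whose level is obtained from that of $\{\rho_\lambda\}$ by multiplying the $\id{q}$-part by $\id{q}$, whose associated strictly compatible system $\{\rho_{2,\lambda}\}$ satisfies $\bar\rho_{2,\id{p}}\simeq \bar\rho_{\id{p}}$ and is Steinberg at $\id{q}$. By strict compatibility this Steinberg type at $\id{q}$ holds for every $\lambda$, so by the local Mackey argument above no $\rho_{2,\lambda}$ can be induced from a quadratic extension of $F$; hence $\{\rho_{2,\lambda}\}$ is non-dihedral. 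Finally, the congruence $\rho_{\id{p}}\equiv \rho_{2,\id{p}}\pmod{\id{p}}$ is MLT in both directions: both representations share the same absolutely irreducible adequate residual representation, have the same Hodge-Tate weights, and have tame/potentially diagonalizable local behavior at primes above $p$ (the dihedral side is a direct sum or induction of characters at $p$, and $p$ is unramified in $L$), so Theorem~\ref{thm:MLT3} applies from either side.

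The main obstacle is quoting the Ribet-style level-raising result for Hilbert modular forms in the exact generality needed (arbitrary parallel or non-parallel weight, arbitrary totally real $F$) while simultaneously arranging that $\{\rho_{2,\lambda}\}$ has the technical properties (e.g.\ large residual image at $\id{p}$ and other primes used later) required by subsequent steps in the chain. Both issues are addressed by imposing further Chebotarev conditions on $\id{q}$ and by taking $p$ large enough to make adequacy automatic.
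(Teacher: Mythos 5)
Your proof is correct in spirit and reaches the same end state as the paper's, but it takes a genuinely different route at two points. First, you appeal to automorphy of the dihedral system (via automorphic induction / theta series) and then invoke classical Ribet-style level-raising for Hilbert modular forms (Fujiwara--Rajaei--Jarvis) to produce $\{\rho_{2,\lambda}\}$; the paper instead works entirely on the Galois side, using Snowden's lifting theorem (Theorem 7.2.1 of \cite{Snowden}) to solve the global deformation problem once the local raising-the-level condition $\normid{q}\equiv -1 \pmod{p}$, $a_{\id{q}}=0$ is satisfied, and then puts the resulting lift into a strictly compatible system via potential modularity. The paper's abstract version is preferable for its stated goal of eventually pushing the argument through for non-modular Galois representations, though your modular route is perfectly valid here. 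Second, you run the MLT step through Theorem~\ref{thm:MLT3} (potential diagonalizability), so you drop the paper's requirement that $p$ split completely in $F$ and instead need $p\geq 7$ and $L\not\subset F(\xi_p)$; the paper imposes $p>5$ split completely in $F$ precisely so that Theorem~\ref{thm:MLT1} applies directly, which sidesteps any discussion of potential diagonalizability. Your route therefore has an extra obligation you only half discharge: you check that the dihedral side is potentially diagonalizable above $p$, but for the level-raised side you would also need (say) $p$ unramified in $F$, $p$ prime to the new level, and Hodge--Tate weights in the Fontaine--Laffaille range so that $\rho_{2,\id{p}}$ is crystalline with small weights; this is arrangeable by taking $p$ large and outside $S$, but you should state it. One small merit of your write-up is the explicit Mackey-theoretic justification for why a Steinberg local component precludes being induced from a quadratic extension; the paper simply asserts this as well known.
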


\begin{proof}
  It is well known that dihedral compatible families do not have
  Steinberg primes in the level, so we just need to add a Steinberg prime
  to our representation by some raising the level argument. Let
  $\lambda$ be a prime over a prime $p >5$ and such that:
  \begin{itemize}
  \item $p$ splits completely in $F$.
  \item $\lambda \not \in S$, i.e. $\rho_{\id{q}}$ is unramified at
    $\lambda$ if $\id{q} \neq \lambda$.
  \item $L$ and $F(\xi_p)$ are disjoint, i.e. $F(\xi_p) \cap L =F$.
  \end{itemize}
  We want to add a Steinberg prime $\id{q}$ modulo $\lambda$, and by
  the previous choice, we are in the hypothesis of Theorem 7.2.1 of
  \cite{Snowden} which says that it is enough to raise the level
  locally. The local problem is standard, and can be achieve by
  Tchebotarev's Theorem as follows: take $\id{q}$ inert in the
  extension $L/F$, so that $a_q=0$. Then the local raising the level
  condition becomes $ \normid{q} \equiv -1 \pmod p$, so we chose any
  such prime and get a global representation with the desired
  properties. The existence of a strictly compatible system attached to
  such global representation follows from Taylor's potential
  modularity result (see \cite{BLGGT2}) plus the argument from
  \cite{LuisFamilies}, as generalized in \cite{BLGGT2}. That MLT holds in both directions comes from
  Theorem~\ref{thm:MLT1}.
\end{proof}


\begin{prop}
  Let $\{\rho_\lambda\}$ be a strictly compatible system of odd,
  non-dihedral representations. Then there exists an integer $B$, such that if $\norm(\lambda)>B$ then $\SL_2(\F_p)\subset \im(\overline{\rho}_\lambda)$.
\label{prop:bigimage}
\end{prop}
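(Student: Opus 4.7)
The approach is Dickson's classification of finite subgroups of $\GL_2(\overline{\F}_p)$. For $p\ge 5$, any such subgroup either (a) contains $\SL_2(\F_{p^s})$ for some $s\ge 1$ (in which case $\SL_2(\F_p)\subseteq \SL_2(\F_{p^s})$ sits inside the image), or (b) is reducible, or (c) has projective image a generalized dihedral group, i.e.\ is contained in the normalizer but not the centralizer of a Cartan, or (d) has projective image isomorphic to $A_4$, $S_4$, or $A_5$. The plan is to show that each of (b), (c), (d) can occur only for finitely many $\lambda$, so that (a) holds for all $\lambda$ of sufficiently large norm.

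I would rule out (b) first. If $\overline{\rho}_\lambda$ were reducible for a sequence of $\lambda$ with residue characteristics going to infinity, then for every unramified prime $\id{q}$ of $F$ the fixed algebraic numbers $\trace\rho_\lambda(\Frob_{\id{q}})$ and $\det\rho_\lambda(\Frob_{\id{q}})$ would reduce modulo $\lambda$ to the sum and product of two character values. A standard Chebotarev and compatibility argument then upgrades this pointwise factorization to an honest factorization of the trace function in characteristic zero, which would make $\rho_\lambda$ itself reducible, contradicting the (implicitly assumed) absolute irreducibility of the system.

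Next I would dispose of (d) via Hodge--Tate weights. If the projective image is $A_4$, $S_4$, or $A_5$ for infinitely many $\lambda$, then for every $\sigma \in G_F$ the reduction of $\trace\rho_\lambda(\sigma)^2/\det\rho_\lambda(\sigma)$ modulo $\lambda$ lies in the fixed finite subset $\{\,2+\zeta+\zeta^{-1}:\zeta^n=1,\ n\le 5\,\}$ of $\overline{\F}_p$. Since the numerator and denominator are fixed algebraic numbers independent of $\lambda$ and their reductions modulo infinitely many primes hit only this finite set, the characteristic-zero ratio must itself take values only in this set. Hence the projective image of $\rho_\lambda$ is finite, so $\rho_\lambda$ is, up to twist, an Artin representation and has all Hodge--Tate weights equal to $0$; but the definition of strictly compatible system requires each $H_\tau$ to consist of two \emph{distinct} integers, a contradiction.

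Case (c) is the main obstacle. A dihedral projective image yields a quadratic character $\overline{\chi}_\lambda:G_F\to\overline{\F}_p^\times$ with $\overline{\rho}_\lambda\cong\overline{\rho}_\lambda\otimes\overline{\chi}_\lambda$. Outside $\ell$ the character $\overline{\chi}_\lambda$ is ramified only in $S$, and at primes dividing $\ell$ the crystallinity of $\rho_\lambda$ (with Hodge--Tate weights drawn from the fixed set $\{H_\tau\}$) restricts its possible ramification at $\ell$ to a controlled finite list up to unramified twist. Peeling off the $\ell$-part leaves a quadratic character of $F$ with conductor supported in $S$, hence drawn from a \emph{finite} set; pigeonholing on $\lambda$ gives a fixed characteristic-zero quadratic character $\chi$ with $\overline{\rho}_\lambda\cong\overline{\rho}_\lambda\otimes\overline{\chi}$ for infinitely many $\lambda$, which the Chebotarev argument from case (b) then lifts to $\rho_\lambda\cong\rho_\lambda\otimes\chi$. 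This identifies the whole compatible system as induced from $F^{\ker\chi}$, contradicting non-dihedrality. The delicate point, and the one that genuinely uses the $p$-adic Hodge-theoretic content of ``strictly compatible'' rather than mere compatibility, is controlling the ramification of $\overline{\chi}_\lambda$ at $\ell$ uniformly in $\lambda$ so that the pigeonhole step is available.
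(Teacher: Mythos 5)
Your proposal takes a genuinely different route from the paper: the paper's proof is essentially three citations, disposing of the reducible, dihedral, and exceptional cases by appealing to Lemma~5.4, Corollary~5.2, and Lemma~5.3 of Calegari--Gee respectively (and a following remark offers a second route via potential automorphy and Dimitrov's large-image result for Hilbert modular forms without CM). You instead try to rebuild the Calegari--Gee arguments from scratch. Your sketch for the reducible case is the standard Serre/Ribet style argument, and your treatment of the exceptional case via bounded $\trace^2/\det$ values, finiteness of the projective image, and the consequent equality of Hodge--Tate weights is clean (though you should state the finiteness of $\trace^2/\det$ for Frobenius elements and then extend by Chebotarev, since only there is the quantity $\lambda$-independent, and you need a pigeonhole on the root of unity $\zeta_q$ for each $q$ before concluding equality in characteristic zero).

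The dihedral case, however, has a genuine gap. You correctly single out the control of the twist character $\overline{\chi}_\lambda$ at primes above $\ell$ as the crux, but the phrase ``crystallinity restricts its possible ramification at $\ell$ to a controlled finite list up to unramified twist'' does not do the job. For odd $\ell$ a quadratic character of $G_{F_v}$, $v\mid\ell$, is automatically at most tamely ramified, so it already lies in a finite list up to unramified twist without any $p$-adic Hodge theory; that is not the obstruction. The obstruction is that a ramified quadratic character at $v\mid\ell$ cuts out a quadratic extension of $F$ whose ramification locus moves with $\ell$, so the pigeonhole on the global character fails as soon as ramification at $\ell$ is allowed. What one must actually prove is that for $\ell$ sufficiently large $\overline{\chi}_\lambda$ is \emph{unramified} at every $v\mid\ell$; this is where Fontaine--Laffaille theory is used: with distinct, bounded Hodge--Tate weights and $\ell$ large, $\overline{\rho}_\lambda|_{I_v}$ is either a sum of powers of the niveau-$1$ fundamental character (ordinary shape) or induced from the \emph{unramified} quadratic extension (niveau-$2$ shape), and neither is induced from a ramified quadratic extension. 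You assert the conclusion you need but do not supply this analysis, and without it the pigeonhole and the lift to a characteristic-zero self-twist $\rho_\lambda\cong\rho_\lambda\otimes\chi$ do not go through.
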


\begin{proof}
  According to Dickson's classifications of subgroups of
  $\PGL_2(\FF_\lambda)$, when we consider the residual
  representations, they might:
  \begin{enumerate}
  \item contain $\PSL_2(\FF_\lambda)$,
  \item be reducible,
  \item be dihedral,
  \item be isomorphic to $A_4$, $S_4$ or $A_5$.
  \end{enumerate}
  We want to prove under the running hypothesis, there are only finitely
  many primes where the first case does not hold. But the second case
  is exactly Lemma $5.4$ of \cite{Calegari-Gee}, the third case
  Corollary $5.2$ of \cite{Calegari-Gee} (here we use the assumption
  that the system is not dihedral), and the last case is Lemma $5.3$
  of \cite{Calegari-Gee}.
\end{proof}

\begin{remark} Another way to prove the last Proposition (following
  the classical approach of Ribet) it to first apply some potential
  automorphy result (like in \cite{BLGGT2}) to deduce that the system
  is potentially modular. Then its restriction is isomorphic to that
  of a Hilbert modular form. Furthermore, since our abstract
  representations are not dihedral, the respective Hilbert modular
  form has no CM. But for Hilbert modular forms, such result is proven
  in \cite{Dimitrov} Proposition 0.1.
\end{remark}

By the above considerations, from now on we will only consider
non-dihedral families, so we will skip writing this hypothesis in the
next results.

\begin{prop}
Let $\{\rho_{1,\lambda}\}$ be a strictly compatible system of odd
irreducible Galois representations. Then there exists a compatible
system $\{\rho_{2,\lambda}\}$ of parallel weight $2$ representations
and a prime $\id{p}$ such that $\rho_{1,p} \equiv \rho_{2,p}
\pmod{p}$ and MLT holds in both directions.
\label{prop:parallel2}
\end{prop}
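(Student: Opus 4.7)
The plan is to perform a single weight reduction step at a well-chosen large auxiliary prime.

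First I would choose a rational prime $p$ satisfying simultaneously: $p$ splits completely in $F$; $p$ lies outside the bad set $S$ of $\{\rho_{1,\lambda}\}$; $p$ is strictly larger than all Hodge-Tate weights of the system, so that, after twisting by an appropriate power of the cyclotomic character, the Hodge-Tate weights at each embedding $\tau$ become $(0,k_\tau-1)$ with $2\le k_\tau\le p-1$ (Fontaine--Laffaille range); and, by Proposition~\ref{prop:bigimage}, $p$ is large enough that $\SL_2(\F_p)\subset \im(\bar\rho_{1,p})$, guaranteeing adequacy. Such $p$ exists by Chebotarev, since splitting in $F$ cuts out a positive density set of primes and the remaining conditions exclude only finitely many.

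Next I would produce a parallel weight $2$ crystalline lift of $\bar\rho_{1,p}$. Since $p$ splits completely and $\bar\rho_{1,p}|_{G_{F_\lambda}}$ is Fontaine--Laffaille at each $\lambda\mid p$, the Kisin--Gee theory of potentially Barsotti--Tate deformation rings, together with the weight part of Serre's conjecture for Hilbert modular forms, shows that after possibly twisting once more by a power of the cyclotomic character, parallel weight $2$ is among the Serre weights of $\bar\rho_{1,p}$. This yields a crystalline Galois representation $\rho_2\colon G_F\to\GL_2(E)$ of parallel weight $2$ lifting $\bar\rho_{1,p}$. By Taylor's potential modularity combined with the argument of \cite{LuisFamilies} as generalized in \cite{BLGGT2}, $\rho_2$ sits in a strictly compatible system $\{\rho_{2,\lambda}\}$.

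By construction $\bar\rho_{1,p}\simeq\bar\rho_{2,p}$, so the congruence $\rho_{1,p}\equiv \rho_{2,p}\pmod{\lambda}$ holds for a suitable $\lambda\mid p$ in the compositum of the coefficient fields. Both representations are de Rham with distinct Hodge-Tate weights, both are potentially modular, and the residual image contains $\SL_2(\F_p)$ and is therefore adequate. Theorem~\ref{thm:MLT1}, or Theorem~\ref{thm:MLT3} when the local behavior at $p$ demands it, applies in both directions, so the congruence is MLT.

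The main obstacle is the second step: the existence of the parallel weight $2$ crystalline lift with prescribed residual reduction. This is routine when $\bar\rho_{1,p}$ is ordinary (via Hida theory and Theorem~\ref{thm:MLT2}), but in the non-ordinary case it genuinely requires the Kisin--Gee weight-part machinery, and it is precisely to enable those results that one imposes the splitting of $p$ in $F$, the largeness of $p$, and the Fontaine--Laffaille bound on the Hodge-Tate weights.
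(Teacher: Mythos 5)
The overall strategy you take---pick a large prime $p$ split in $F$, outside $S$, larger than the Hodge--Tate weights, with big residual image, and then produce a weight-$2$ lift of $\bar\rho_{1,p}$---is the same as the paper's. The divergence is in how you justify the key step, and that is where the gap lies.

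The proposition is stated in Section~\ref{sec:abstract}, for \emph{abstract} strictly compatible systems: no modularity of $\bar\rho_{1,p}$ is assumed. But the ingredients you invoke to produce the parallel-weight-$2$ lift---``the weight part of Serre's conjecture for Hilbert modular forms,'' Hida theory in the ordinary case, and Theorem~\ref{thm:MLT2}---are all statements about modular residual representations and their automorphic lifts. For an abstract $\bar\rho_{1,p}$ you can at best invoke potential modularity, which only gives you Hilbert modular realizations over a finite totally real extension $F'/F$ and hence a lift over $F'$, not over $F$. Moreover, even if you knew which local Serre weights are admissible at primes above $p$, this only constrains the local problem; the passage from compatible local lifts to a global lift of $\bar\rho_{1,p}$ over $F$ is exactly the nontrivial step, and Kisin--Gee's analysis of local deformation rings does not by itself close that gap.

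The paper handles this by invoking Theorem~7.6.1 of \cite{Snowden}, which is a Galois-deformation-theoretic global lifting theorem: under the big-image hypothesis and with a choice of lift $\psi$ of $\bar\psi = \det\bar\rho\cdot\bar\chi_p^{-1}$, it directly produces a geometric lift of $\bar\rho_{1,p}$ over $F$ of parallel weight $2$ with determinant $\psi\chi_p$ and ramification at the prescribed set. That theorem (not the weight part of Serre's conjecture) is the lever that makes the argument work for abstract representations. Your write-up does not cite any such global lifting theorem, so the assertion ``This yields a crystalline Galois representation $\rho_2$\dots'' is unjustified as written. Replacing the appeals to modular weight theory with a reference to \cite{Snowden} Theorem~7.6.1 (or the analogous result in \cite{BLGGT2}) would repair the argument; the rest of your set-up (choice of $p$, compatibility of the two MLT directions via Theorem~\ref{thm:MLT1}/\ref{thm:MLT3}, and placing $\rho_2$ into a strictly compatible system via potential modularity) is in line with what the paper does.
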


\begin{proof}
  Let $p>5$ be a prime which splits completely in $F$, does not divide
  the level of $\rho$, is larger than all weights of the system and
  such that the image of $\overline{\rho}_{1,\id{p}}$ is large.  Let
  $\det(\overline{\rho}) =\overline{\psi} \overline{\chi_p}$, where
  $\overline{\chi_p}$ is the reduction of the cyclotomic character,
  and let $\psi$ be any lift of $\overline{\psi}$. Then Theorem 7.6.1
  of \cite{Snowden} implies that $\overline{\rho}$ admits a weight two
  lift to $\overline{\QQ}_p$ which ramifies at the same primes as
  $\rho$ and $\psi$, with determinant $\psi \chi_p$. MLT hold in both
  directions by the same proof as the previous Proposition~\ref{prop:non-dihedral}.
\end{proof}




\subsection{Adding a good dihedral prime}

As already mentioned, while working with MLT one needs to ensure
that residual images are big enough to be in the hypothesis of such
a theorem. Sometimes, one needs the restriction of the residual
representation to the cyclotomic extension of $p$-th roots of unity
to  have adequate image, but for most MLT requiring irreducibility
of this restriction is enough (and for $p >5$ it is known that both
properties are equivalent). A way to get this property guaranteed at
most steps is by introducing to the level an extra prime which
forces the image modulo all primes up to a certain bound to be
``non-exceptional'', i.e. it is irreducible, and its
projectivization is not dihedral, nor the exceptional groups $A_4$,
$S_4$, $A_5$.  A way to get this, is by adding a ``good dihedral
prime" (with respect to the given bound) as was introduced by Khare
and Wintenberger in their work on Serre's conjecture. 

The difference with the classical setting is that since we work with
two strictly compatible systems at the same time, we need to add the
same good dihedral prime to both of them.

\begin{prop}
  Let $\{\rho_{1,\lambda}\}$ and $\{\rho_{2,\lambda}\}$ be two
  strictly compatible systems of continuous, odd, irreducible
  representations of parallel weight $2$.  Fix $B$ a positive integer,
  larger than $5$ and than all primes in the conductors of both
  systems. Let $p \equiv 1 \pmod 4$ be a rational prime such that:
  \begin{itemize}
  \item $p$ is bigger than $B$.
  \item $p$ splits completely in the compositum of the coefficient fields of $\rho_1$ and $\rho_2$.
  \item $p$ is relatively prime to the conductors of both systems.
  \item $\im (\overline{\rho}_{i,\id{p}})=\GL_2(\F_p)$, $i=1,2$, for some prime $\id{p}$ over $p$.
  \end{itemize}
  Then there exists a prime $q$ not dividing the conductor of the
  systems such that:

\begin{itemize}
\item $q \equiv -1 \pmod{p}$.
\item $q$ splits completely in the extension $F'$ given by the
  compositum of all quadratic extensions of $F$ ramified only at
  primes above rational primes $\ell < B$.
\item $q \equiv 1 \pmod{8}$.
\item There exists a prime ideal $\id{q}$ in $F$ over $q$ such that
  the image of $\overline{\rho}_{1,\id{p}}(\Frob_{\id{q}})$ and $\overline{\rho}_{2,\id{p}}(\Frob_{\id{q}})$ both have eigenvalues
  $1$ and $-1$.
\end{itemize}
With this choice of primes, there exists two strictly compatible
systems of continuous representations $\{\varrho_{1,\lambda}\}$ and
$\{\varrho_{2,\lambda}\}$ of parallel weight $2$ such that:
\begin{enumerate}
\item[$(1)$] $\overline{\rho}_{1,\id{p}} \simeq \overline{\varrho}_{1,\id{p}}$.
\item[$(2)$]$\overline{\rho}_{2,\id{p}} \simeq \overline{\varrho}_{2,\id{p}}$.
\item [$(3)$]$\{\varrho_{i,\lambda}\}$, for $i=1,2$, is locally good dihedral at $\id{q}$
(w.r.t. the bound $B$).
\item [$(4)$]$\varrho_{i,\id{p}}$, is Barsotti-Tate at all primes dividing
  $p$ and has the same type as $\rho_{i,\id{p}}$ locally at any prime
  other that $\id{q}$ for $i=1,2$.
\item [$(5)$] The congruences are MLT.
\end{enumerate}
\label{prop:gooddihedral}
\end{prop}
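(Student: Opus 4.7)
The plan is as follows. The proposition is a two-system version of the standard Khare--Wintenberger good-dihedral-prime lifting, carried out essentially independently for each $i$, with the only coupling being that the auxiliary primes $p$ and $\id{q}$ are chosen to serve both systems simultaneously. I split the argument into three stages: confirm that a $p$ of the stated form exists; produce $\id{q}$ with the four listed properties; then construct $\varrho_{1}$ and $\varrho_{2}$ separately by the good-dihedral recipe.

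Existence of $p$ is a routine Chebotarev exercise. The splitting conditions and $p \equiv 1 \pmod 4$ are Chebotarev conditions in the compositum of the coefficient fields with $F(\zeta_4)$, coprimality to the two conductors excludes finitely many primes, and the surjectivity $\im(\overline{\rho}_{i,\id{p}}) = \GL_2(\F_p)$ at some $\id{p}\mid p$ holds for a density-one set of $p$ by Proposition~\ref{prop:bigimage} (applied to each non-dihedral system), supplemented by the observation that the parallel-weight-$2$ determinant surjects onto $\F_p^*$ once $p$ is coprime to the finite-order part. For $\id{q}$, I apply a second Chebotarev argument on the compositum of $\QQ(\zeta_{8p})$, $F'$, and the number field cut out by the product $\overline{\rho}_{1,\id{p}} \times \overline{\rho}_{2,\id{p}}$. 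The cyclotomic congruences and the splitting in $F'$ translate into a prescribed Frobenius class in the cyclotomic/ $F'$ factor, while the eigenvalue shape is a conjugacy-class condition in $\GL_2(\F_p) \times \GL_2(\F_p)$ realised as soon as the image of the product contains some diagonal pair $(g_1,g_2)$ with each $g_i$ of eigenvalues $\pm1$. By Goursat's lemma and simplicity of $\PSL_2(\F_p)$ the image of the product contains $\SL_2(\F_p) \times \SL_2(\F_p)$ unless the two residual projectivisations are twist-equivalent; in that residual coincidence case one simply enlarges $p$ to break it.

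With $(p,\id{q})$ fixed, for each $i \in \{1,2\}$ I perform the lifting separately. Starting from $\overline{\rho}_{i,\id{p}}$, the combination of $\normid{q} \equiv -1 \pmod p$ and the prescribed Frobenius eigenvalues $\pm 1$ at $\id{q}$ is exactly the Ribet level-raising criterion, so one can produce a deformation that is locally at $\id{q}$ the induction from a character of prime order $t \gg B$ of the unramified quadratic extension of $F_{\id{q}}$, keeping the local type of $\rho_{i,\id{p}}$ at every other ramified prime and Barsotti--Tate at all primes above $p$. Snowden's Theorem~7.6.1 (parallel-weight-$2$ version, fixed determinant) upgrades this deformation to a global geometric lift of $\overline{\rho}_{i,\id{p}}$ with the prescribed local behaviour; Taylor's potential automorphy together with the interpolation technique of \cite{LuisFamilies} as generalised in \cite{BLGGT2} embeds this lift into a strictly compatible system $\{\varrho_{i,\lambda}\}$. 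The MLT status of the mod-$\id{p}$ congruence then follows from Theorem~\ref{thm:MLT3}: adequacy is furnished by $\SL_2(\F_p) \subset \im\overline{\rho}_{i,\id{p}}$ and $p \ge 7$, and potential diagonalisability at $\id{p}$ follows since $p$ splits and is unramified in $F$ and both sides are Barsotti--Tate there.

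The hardest step I expect is twofold. First, the joint Chebotarev argument for $\id{q}$ is delicate when the residual projectivisations of the two systems are twist-equivalent; in that case one needs either a careful choice of twist or a bootstrap that enlarges $p$ enough for the accidental coincidence to disappear. Second, the local-global deformation-theoretic input underlying Snowden's lifting requires checking that the local conditions at $\id{p}$, $\id{q}$, and the other ramified primes can be imposed simultaneously, i.e.\ a global Selmer vanishing statement; the congruence $\normid{q} \equiv -1 \pmod p$ is precisely what makes the local deformation condition at $\id{q}$ smooth and eliminates its potential contribution to the obstruction, so modulo this the other local pieces are standard.
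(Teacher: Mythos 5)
Your approach is essentially the paper's: Chebotarev to produce $p$ and $\id{q}$, then a local-to-global lifting argument (Snowden / Barnet-Lamb--Diamond) to impose the type-$C$ local condition at $\id{q}$ while keeping everything else, then MLT. Two places where you deviate, and both work against you.

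First, the Chebotarev step for $\id{q}$. You invoke Goursat's lemma to get $\SL_2(\F_p)\times\SL_2(\F_p)$ inside the image of the product representation and then worry, correctly, that this fails when the residual projectivisations of the two systems are twist-equivalent; your proposed repair (``enlarge $p$'') does not help when, say, $\rho_1\cong\rho_2$ or $\rho_2$ is a genuine twist of $\rho_1$ (the twist-equivalence then persists for every $p$). But none of this is needed. Both $\rho_1$ and $\rho_2$ are odd, so for any complex conjugation $c$ the matrices $\overline{\rho}_{1,\id{p}}(c)$ and $\overline{\rho}_{2,\id{p}}(c)$ each have eigenvalues $1,-1$; moreover $c$ acts as $-1$ on $\mu_p$, giving $q\equiv -1\pmod p$ automatically. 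The paper therefore chooses $\Frob_q$ to lie in the conjugacy class of complex conjugation inside the (compositum) extension cut out by $\overline{\rho}_{1,\id{p}}$, $\overline{\rho}_{2,\id{p}}$, $\QQ(\zeta_p)$, $\QQ(i)$ and $F$, and imposes splitting in $F'\cdot\QQ(\zeta_8)$ separately (here $p\equiv 1\pmod 4$ is used to reconcile the two cyclotomic constraints). This removes the Goursat and twist-equivalence discussion entirely.

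Second, the local deformation condition at $\id{q}$. You take the inducing character of the unramified quadratic extension of $F_{\id{q}}$ to have prime order $t\gg B$ with $t$ a freshly chosen prime, but you have not arranged $t\mid q^2-1$, so there is no such character in general. The correct (and the paper's) choice is to take the character of order $p$ itself: since $q$ splits completely in $F$ (because it splits in $F'\supset F$) one has $F_{\id{q}}=\QQ_q$, and $q\equiv -1\pmod p$ forces $p\mid q^2-1$, so an order-$p$ character of the unramified quadratic extension exists and the good-dihedral bound is then the same $p>B$. Everything else in your sketch (the local-global lift via Snowden Theorem 7.2.1 or \cite{BD} Theorem 3.2.2, embedding into a strictly compatible system via potential automorphy, and checking the congruence is MLT, for which the paper uses Theorem~\ref{thm:MLT1} since both sides are parallel weight $2$ and $p$ is large and split) matches the paper.
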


\begin{remark} We will not reproduce here the precise definition of
good-dihedral prime (it can be found in \cite{KW} and \cite{LuisBC})
or what it means for a compatible system to be locally good dihedral
at a prime w.r.t. certain bound $B$. Let us just recall that a
good-dihedral prime is a supercuspidal prime in a compatible system
such that the ramification at this prime forces the system to have
residual images containing $\SL_2(\F_p)$ every time the residual
characteristic $p$ is bounded by $B$ (and a bit more for $p=2$ or
$3$: also in these characteristics the image is forced to be
non-solvable).
\end{remark}

\begin{remark}
  We do not want to make precise what condition $(4)$
  of the last statement means (the local type), since it will not be
  important for our purposes, but what we prove is the following: for
  abstract representations, we will use Theorem 7.2.1 of
  \cite{Snowden}, where a ``type'' is understood as a Weil type (no
  information on the monodromy), while for modular representations
  (actually residually modular ones), we will use Theorem 3.2.2 of
  \cite{BD}, which uses the complete notion of type.
\end{remark}

\begin{proof}
  The existence of the primes $\id{p}$ and $\id{q}$ follows with
  almost the same arguments as \cite{LuisBC} (Lemma 3.3). The only
  difference is that we need to consider the compositum of $\Q(i), F$
  and the coefficient field of $\rho_1$ and $\rho_2$. Take $p$
  big enough (for the images to be large) and split in such
  extension. Then $q$ is chosen using Tchebotarev density Theorem, with
  the condition that it hits complex conjugation in the same suitable
  field.

  We are lead to prove the existence of a lift of
  $\bar{\rho}_{1,\id{p}}$ with the desired properties. By Theorem
  7.2.1 of \cite{Snowden}, or by Theorem 3.2.2 of \cite{BD} (see the
  last remark), we know that a global representation with the desired
  properties exists if and only if  locally the corresponding lifts do exist, so we only need
  to show which are the local deformation conditions:
  \begin{itemize}
  \item At the primes $\id{l} \neq \id{q}$, that of $\rho_i|_{G_{\id{l}}}$.
  \item $\varrho_i|_{D_{\id{q}}} =
    \Ind_{\QQ_q}^{\QQ_q[\sqrt{\epsilon}]}(\chi)$, where
    $\QQ_q[\sqrt{\epsilon}]$ is the unique quadratic unramified
    extension of $\QQ_q$, and $\chi$ is a character with order $p$
    (this is called type $C$ in \cite{Snowden}).
  \end{itemize}
  This proves the existence of $\varrho_{i,\id{p}}$. The congruence is
  MLT in both directions because of Theorem~\ref{thm:MLT1} (in this
  case both forms are of parallel Hodge-Tate weight $2$ which is
  smaller than the prime $p$).  Since we are in the hypothesis of an
  MLT theorem, we can put such representation into a strictly
  compatible system and get the result.
\end{proof}

\begin{remark}
  Although we stated the result for representations of parallel weight
  $2$, in general we can first move to parallel weight $2$ (using
  Proposition~\ref{prop:parallel2}) and then add the good dihedral
  prime.
\end{remark}

With this result, we can prove Mazur's Theorem.

\begin{thmA}[Mazur]
Let $F$ be a totally real number field, $f_0$ and $f_\infty$ two Hilbert
modular forms, whose weights are congruent modulo $2$, then there
exists Hilbert modular forms $\{h_i\}_{i=1}^n$ such that $h_1 =
f_0$, $h_n = f_\infty$ and
such that
for $i=1,\dots,n-1$ there exist $\lambda_i$ prime such that $\rho_{h_i,\lambda_i} \equiv \rho_{h_{i+1},\lambda_i} \pmod{\lambda_i}$.
\end{thmA}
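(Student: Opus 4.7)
The plan is to reduce the chain-connection problem to a situation where a Mazur-style connectedness statement for a single Hecke algebra applies, inserting only a bounded number of explicit congruences at the two ends of the chain.

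First, I would apply Proposition~\ref{prop:parallel2} separately to $f_0$ and to $f_\infty$, producing parallel weight $2$ Hilbert modular newforms $F_0$ and $F_\infty$ together with single congruences $f_0 \equiv F_0$ and $f_\infty \equiv F_\infty$ at the level of the associated compatible systems. Here the hypothesis that the weights of $f_0$ and $f_\infty$ are congruent modulo $2$ is exactly what is needed in order to guarantee that the determinant characters, which involve powers of $\chi_\ell$ depending on $k-1$, admit consistent weight $2$ lifts $\psi\chi_p$ on both sides. Each replacement contributes one link to the final chain.

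Second, I would apply Proposition~\ref{prop:gooddihedral} to the two parallel-weight-$2$ strictly compatible systems attached to $F_0$ and $F_\infty$, producing new parallel-weight-$2$ Hilbert modular newforms $G_0$ and $G_\infty$ that share a common good dihedral prime $\mathfrak{q}$, each attached to the previous form by a single congruence. After the first two steps the chain already contains
\[
f_0 \equiv F_0 \equiv G_0 \qquad\text{and}\qquad G_\infty \equiv F_\infty \equiv f_\infty .
\]
By further applying standard level-raising congruences at a finite set of auxiliary Steinberg primes, I can arrange $G_0$ and $G_\infty$ to live at the same level $\mathfrak{n}$ (with the same local type at $\mathfrak{q}$), so that they both correspond to newforms in the parallel-weight-$2$ space of level $\mathfrak{n}$.

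Third, I would invoke a Mazur-style connectedness statement for the Hecke algebra $\TT$ acting on that space: two newforms at level $\mathfrak{n}$ correspond to two minimal primes of $\TT$, and a path between them in the spectrum of $\TT$ passing through maximal ideals each containing two consecutive minimal primes produces a chain of pairwise-congruent newforms. Geometrically this "path" is extracted from connectedness of a suitable Shimura variety whose cohomology realises $\TT$ --- a Shimura curve obtained via a Jacquet-Langlands transfer when $[F:\Q]$ is odd, or a Hilbert modular variety of even degree otherwise. Concatenating this middle chain with the end-pieces produced in the first two steps yields the required sequence $\{h_i\}$.

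The main obstacle is clearly the third step: formalising, in the Hilbert modular setting, the analogue of Mazur's $X_0(N)$ argument, where one exploits the connectedness of $J_0(N)$ to bridge arbitrary maximal ideals of the classical Hecke algebra by congruences. Over a totally real field $F$ one must replace $J_0(N)$ by a Jacobian of a Shimura curve (via Jacquet-Langlands in odd degree) or by cohomology of a Hilbert modular variety (in even degree), and extract the combinatorial bridge structure from the appropriate geometric connectedness input. This abstract connectedness mechanism is precisely what is developed in Sections~\ref{sec:abstract} and~\ref{sec:killlevel}.
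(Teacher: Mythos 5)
Your proposal follows the same broad architecture as the paper: normalize both forms to parallel weight $2$, introduce a common auxiliary supercuspidal (good dihedral) prime, place both forms in a space where a Mazur-style Hecke-algebra connectedness argument applies, and concatenate the resulting middle chain with the two short end pieces. Two points, however, differ from what the paper actually does, and one of them matters.

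First, for the geometric realization you propose a Shimura curve when $[F:\Q]$ is odd and ``a Hilbert modular variety of even degree otherwise.'' The paper never passes to a higher-dimensional Hilbert modular variety: it always works with a Shimura curve $X^{\id p}(\id n)$, using the auxiliary prime $\id p$ precisely to complete the ramification set of the quaternion algebra to even cardinality when $[F:\Q]$ forces this (i.e.\ the good dihedral/Steinberg auxiliary prime plays a double role, both as a large-image guarantee and as a place at which the forms are non-principal-series so they survive Jacquet--Langlands transfer to the curve). This is not a cosmetic choice: Mazur's core argument is that a Jacobian carries a \emph{principal} polarization that cannot split as a nontrivial product of principally polarized factors, and this is an argument about Jacobians of curves. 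It does not extend to cohomology of a higher-dimensional Hilbert modular variety, so the route you sketch in even degree would actually fail at step three.

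Second, you assert that after level-raising both forms can be made \emph{newforms} at the same level $\id n$ and then treated as minimal primes of $\TT$. The paper does not, and need not, arrange newness: it only puts both forms inside $S_2(\Gamma_1(\id n))$ and then confronts the genuine difficulty this creates, namely that old forms appear with multiplicity greater than one in the Jacobian, which a priori breaks Mazur's multiplicity-one argument. The paper's remedy is to work with the \emph{anemic} Hecke algebra $\TT_0$ (generated by Hecke operators away from the level), for which connectedness is enough and for which one only needs to rule out orthogonal decompositions $A\times B$ factor by factor. This subtlety is the real content of transporting Mazur's $X_0(N)$, $N$ prime, argument to the general Hilbert setting, and your proposal elides it. Otherwise your sketch of steps one and two (using Propositions~\ref{prop:parallel2} and~\ref{prop:gooddihedral}) matches the paper's, and your identification of the Hecke-algebra connectedness step as the crux is correct.
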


\begin{proof}
  By the previous results, we can assume that both forms are of
  parallel weight $2$, and do not have complex multiplication (or we
  move to such a situation using Proposition~\ref{prop:non-dihedral}
  and Proposition~\ref{prop:parallel2}), for some common level
  $\Gamma_1(\id{n})$ (of course they might not be new at the same
  level). The idea now is to find both modular forms in the cohomology
  of a Shimura curve, and for this purpose we need to add an auxiliary
  prime to the level where both forms are not principal series (in
  case $[F:\QQ]$ is odd). So what we do is to raise the level of both
  forms at an auxiliary prime $\id{p}$ as in
  Proposition~\ref{prop:gooddihedral} (we could also add a Steinberg
  prime). Now we consider the Shimura curve $X^{\id{p}}(\id{n})$
  ramified at all infinite places of $F$ but one, and at the auxiliary
  prime $\id{p}$ if needed (depending whether $[F:\QQ]$ is odd or
  even) and with level $\id{n}$.

  We apply Mazur's argument just as in \cite{Mazur} (proposition 10.6,
  page 98) with some minor adjustments. The main idea of his proof is
  the following: if the Jacobian $J$ is a product of two abelian
  varieties $A \times B$, since $J$ decomposes (up to isogeny) as a
  product of simple factors with multiplicity one, there are no
  nontrivial homomorphisms from $A$ to $\hat{B}$ nor from $B$ to
  $\hat{A}$. Then the principal polarization of $J$ induces principal
  polarizations in $A$ and $B$, but a Jacobian cannot decompose as a
  nontrivial direct product of principally polarized abelian varieties
  (which follows from the irreducibility of its $\theta$-divisor, see
  \cite{Arba}). From this it follows that $\Spec \TT$ is connected,
  where $\TT$ denotes the Hecke algebra acting on $J$.

  In his original article Mazur was dealing with the curve $X_0(N)$,
  with $N$ a prime number, so there are no old forms appearing in
  $J_0(N)$. To use the same argument in our context, we have to deal
  with old forms as well, and the problem is that the abelian
  varieties $A_f$ corresponding to old forms do not appear with
  multiplicity one in the decomposition (up to isogeny) of the
  Jacobian of a modular or a Shimura curve. But this is not a problem if we
  observe that for what we want it is not necessary to prove the
  connectedness of $\Spec \TT$, it is enough to show that the anemic Hecke algebra $\TT_0$ generated only by the
  Hecke operators with index prime to the
  level is connected.
  Therefore, what we need is to discard
  the cases where the Jacobian of $X^{\id{p}}(\id{n})$ decomposes as a
  product of abelian varieties $A \times B$ with every simple factor in $A$ and every simple factor in $B$ being
  orthogonal (recall that now these simple factors need not appear with multiplicity one).
  In such case, the same proof as in Mazur's article applies,
  and gives the connectedness we are looking for.
\end{proof}

\section{Killing the level}\label{sec:killlevel}

Now that we have a good dihedral prime, which controls the image of
the residual representations in the families of the two
representations for small primes, we want to connect them at a chosen
level. From know on, we will only consider modular representations,
pointing out in each case the needed result for abstract ones. To
lower the level we will need the following result, whose proof is due
to Fred Diamond.

\begin{lemma}
  Let $\rho_f:\Gal(\overline{F}/F) \to \GL_2(K_{\id{p}})$ be a modular representation
  of level dividing $\id{n} p^r$ (where $\id{p}\mid p$ and $\id{p}\nmid \id{n}$) which
  satisfies the following hypothesis:
  \begin{itemize}
  \item The form $f$ has a supercuspidal prime $\id{q} \nmid p$.
  \item The residual image $\overline{\rho_f}$ is absolutely
    irreducible and not bad dihedral.
  \item The determinant $\det(\overline{\rho_f}|_{I_v}) = \chi_p^n$ for
    all $v|p$ and some $n$ independent of $v$.
  \end{itemize}
  Then $\overline{\rho}$ has a modular lift of level dividing $\id{n}$. Moreover
  we can assume the weight of the form is parallel and the character
  has order prime to $p$.
\label{lemma:Diamond}
\end{lemma}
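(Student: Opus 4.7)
The plan is to construct a characteristic-zero lift of $\overline{\rho_f}$ that is crystalline of parallel small Hodge--Tate weights at every prime above $p$, while retaining the type of $\rho_f$ at all other ramified primes, and then to propagate modularity from $\rho_f$ to this lift via one of the MLTs of Section~\ref{sec:MLT}. Crystallinity at $v\mid p$ forces the resulting modular form to have level prime to $p$, so combined with minimality away from $p$ the form will have level dividing $\mathfrak{n}$.

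Concretely, I would first fix a finite order character $\psi$ of $G_F$ of order prime to $p$, unramified at all places above $p$, lifting $\det(\overline{\rho_f})\chi_p^{-n}$; such a $\psi$ exists by a Teichm\"uller-type construction. I would then consider the deformation problem for $\overline{\rho_f}$ with: determinant $\psi\chi_p^n$; the same inertial type as $\rho_f$ at every prime dividing $\mathfrak{n}$ (in particular the supercuspidal type at $\mathfrak{q}$); unramified outside $\mathfrak{n} p$; and crystalline with Hodge--Tate weights $\{0,n\}$ at every $v\mid p$. Away from $p$, the relevant two-dimensional local deformation problems (minimal, supercuspidal, Steinberg, principal series) have smooth generic fibre, so local lifts exist. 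At each $v\mid p$, the hypothesis $\det(\overline{\rho_f}|_{I_v})=\chi_p^n$ with $n$ independent of $v$ is precisely the determinant condition needed for the existence of a crystalline local lift with parallel Hodge--Tate weights $\{0,n\}$ and determinant $\psi\chi_p^n$.

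With these local lifts in hand, I would invoke a global lifting theorem -- Theorem~7.2.1 of \cite{Snowden} or Theorem~3.2.2 of \cite{BD} -- to patch them into a global lift $\rho$ of $\overline{\rho_f}$ with the prescribed local behaviour. The image condition required by these results is supplied by the assumption that $\overline{\rho_f}$ is absolutely irreducible and not bad dihedral, together with the presence of the supercuspidal prime $\mathfrak{q}$ in the level of $f$, which pushes the residual image away from the small exceptional cases (in the spirit of the good-dihedral discussion). Once $\rho$ is constructed, I would invoke an MLT: since $\rho$ is crystalline of parallel Hodge--Tate weights $\{0,n\}$ at each $v\mid p$ it is potentially diagonalizable (e.g.\ in the Fontaine--Laffaille range, or by an auxiliary weight-$2$ reduction using Lemma~\ref{lemma:typesatp} when $n=1$), and since $\rho_f$ is modular with $\overline{\rho_f}\simeq\overline{\rho}$, Theorem~\ref{thm:MLT1} or Theorem~\ref{thm:MLT3} yields modularity of $\rho$. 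The corresponding Hilbert modular form $h$ has parallel weight $n+1$, character $\psi$ of order prime to $p$, and level dividing $\mathfrak{n}$, which is the claim.

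The main obstacle I anticipate is the twofold local issue at $v\mid p$: producing a crystalline local lift with the prescribed determinant and simultaneously controlling the residual image so that the MLT applies. The uniform inertial determinant hypothesis $\chi_p^n$ is the precise input that makes a parallel-weight crystalline choice available consistently at every $v\mid p$, and the supercuspidal prime $\mathfrak{q}$ is the precise input that prevents the residual image from collapsing after restriction to $F(\xi_p)$, so both hypotheses are genuinely being used. A secondary technicality is the passage from ``some'' crystalline lift to one whose character has order prime to $p$, which is handled by choosing $\psi$ through the Teichm\"uller lift from the start.
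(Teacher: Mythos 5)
Your approach is genuinely different from the paper's. The paper (following Diamond) works entirely within the framework of the Serre weight conjectures of \cite{BDJ}: it invokes Proposition~2.10 and Corollary~2.11 there to produce a Serre weight $\sigma=\otimes_{v\mid p}\sigma_v$ for $\overline{\rho}$, then uses a representation-theoretic fact (from the reference labelled \cite{R}) that every such $\sigma_v$ is a Jordan--H\"older constituent of the reduction of $\Symm^{k-1}$ for a \emph{common}, sufficiently large $k$, and applies Proposition~2.10 once more to conclude modularity of parallel weight $k$ and level prime to $p$. The supercuspidal prime and ``not bad dihedral'' hypotheses serve precisely to control the quaternion algebra and to keep the away-from-$p$ level bounded by $\id{n}$ in these applications. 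No Galois deformation problem is solved and no modularity lifting theorem is invoked at all.

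The central gap in your argument is the assertion that $\det(\overline{\rho_f}|_{I_v})=\chi_p^n$ ``is precisely the determinant condition needed for the existence of a crystalline local lift with parallel Hodge--Tate weights $\{0,n\}$.'' That determinant condition is \emph{necessary} but it is nowhere near \emph{sufficient}. For a fixed $\overline{\rho}|_{G_{F_v}}$, the set of parallel Hodge--Tate weights $\{0,n\}$ for which a crystalline local lift exists is exactly the set of Serre weights of $\overline{\rho}|_{G_{F_v}}$, a specific finite set determined by the recipe in \cite{BDJ} (and Gee et al.), not by the inertial determinant alone. Since $n$ is only specified modulo $p-1$, you must single out a good representative, and you have no control over which $n$ (if any, in the Fontaine--Laffaille range) is actually a Serre weight -- much less a Serre weight simultaneously for all $v\mid p$. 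The paper handles this precisely with the two-step argument: first get \emph{some} Serre weight per place, then pass to a common large $k$ using the $\Symm^{k-1}$ reduction statement. Without that intermediate step, Snowden's Theorem~7.2.1 has nothing to patch, because the local crystalline deformation ring at the chosen weight may well be empty.

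There is also a secondary issue even if the local lift existed at some large parallel weight $k$: outside the Fontaine--Laffaille range, a crystalline representation is not known to be potentially diagonalizable, so Theorem~\ref{thm:MLT3} would not obviously apply; and Theorem~\ref{thm:MLT1} requires $\ell\geq 5$ and completely split, which you haven't arranged. The paper sidesteps all of this by never leaving the mod~$p$ world. If you want to salvage the lifting-plus-MLT route, you would need to import the Serre weight and $\Symm^{k-1}$ arguments anyway to select $k$ correctly, at which point you are essentially running the paper's argument with extra, unnecessary MLT machinery on top.
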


\begin{proof}
  We can assume the weight of the form is parallel and the character
  has order prime to $p$.  We can also assume that $p$ is unramified
  in $F$ so we can appeal directly to \cite{BDJ} instead of slight,
  straightforward generalizations to the ramified case. We will follow
  the notation of the aforementioned article.

  By Proposition 2.10 and Corollary 2.11 of \cite{BDJ},
  $\overline{\rho}$ is modular of some weight $\sigma =
  \otimes_{v|p}\sigma_v$ and central character
  $\otimes_{v|p}(\N_{k_v/F_p})^{n-1}$.  The results of \cite{R} show
  that $\sigma$ is a Jordan-Holder constituent of the reduction of
  $\otimes_{\tau} \Symm^{k-1} \Om_E^2$ for some sufficiently large
  $k$, where the tensor product is over embeddings $\tau: F \to E$ for
  a sufficiently large number field $E$, viewed as contained both in $\CC$
  and in  $\overline{\QQ_p}$.  Another application of Proposition
  2.10 of \cite{BDJ} shows that $\overline{\rho}$ is modular of
  parallel weight $k$ and level prime to $p$.  Moreover the presence
  of a good dihedral prime $\id{q}$ allows us to use an indefinite
  quaternion algebra of discriminant dividing $\id{q}$ and hence
  assume the open compact subgroup has level dividing $\id{n}$ in the
  first application of Proposition 2.10 of \cite{BDJ}.  Moreover
  $\overline{\rho}$ is not badly dihedral in the sense of Lemma 4.11
  of \cite{BDJ}, so the arguments there ensure that we can assume the
  divisibility of $\id{n}$ by the level is preserved in the second
  application of Proposition 2.10 as well.  We can similarly ensure
  the conclusion on the central character.  For further details and
  more general results along these lines, see forthcoming work of
  Diamond and Reduzzi.

\end{proof}

\subsection{Modifying the non-Steinberg primes}

From now on we work under the assumption that MLFMT
(Assertion~\ref{ass:MLT4}) is true.  We call an abstract strictly
compatible system of continuous, odd, irreducible Galois
representations \emph{modular} if there exists a Hilbert modular form,
whose attached Galois representations matches the abstract one.

\begin{thm}
  Let $\{\rho_{1,\lambda}\}$ be a strictly compatible system of
  continuous, odd, irreducible, Galois representations attached to a
  Hilbert newform $f$ over a totally real number field $F$ of parallel
  weight $2$, containing in its conductor a locally good dihedral
  prime $\id{q}$ (w.r.t. some sufficiently large bound $B$).  Then:
  \begin{itemize}
  \item There exists a strictly compatible system of continuous, odd,
    irreducible, parallel weight $2$ representations
    $\{\rho_{2,\lambda}\}$, which is semistable at all primes except
    the same good dihedral prime $\id{q}$ and such that the Steinberg
    ramified primes are bounded in norm by $B$.
\item There exists a chain of congruences of compatible systems
  linking $\{\rho_{1,\lambda}\}$ and $\{\rho_{2,\lambda}\}$ such that
  all congruences involved occur in residual characteristics bounded
  by $B$ and are MLT.
  \end{itemize}
  In particular the system $\{\rho_{2,\lambda}\}$ is also modular.
\end{thm}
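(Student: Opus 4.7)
I would proceed by induction on the number of ``bad'' primes in the conductor. Let $S$ be the set of primes $\id{l}\ne\id{q}$ at which $\rho_{1,\lambda}$ is ramified but not Steinberg, i.e.\ where the local component is ramified principal series or supercuspidal. We will show how to remove one element of $S$ at a time via a chain of MLT congruences of parallel-weight-$2$ compatible systems, all taking place in residual characteristics $\le B$, preserving the good dihedral structure at $\id{q}$ and possibly introducing new Steinberg primes of norm $\le B$. After $|S|$ such passes we obtain the desired $\{\rho_{2,\lambda}\}$; modularity then propagates along the whole chain from the modularity of $f$, since every congruence is MLT in both directions.

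To remove a fixed $\id{l}\in S$, first pick a rational prime $p\le B$ with $p>5$, split completely in $F$, unramified in the coefficient field, coprime to $\id{l}$, and coprime to the order of the inertial type at $\id{l}$. Such $p$ exists because $B$ was taken sufficiently large. Twisting by a suitable finite-order character of conductor supported at $\id{l}$ (and away from $\id{q}$) we may further assume that the residual representation $\overline{\rho}_{1,\id{p}}$ at some $\id{p}\mid p$ is unramified at $\id{l}$. Because the system is locally good dihedral at $\id{q}$ with respect to the bound $B$ and $p\le B$, the argument of Proposition~\ref{prop:gooddihedral} gives $\SL_2(\F_p)\subset \im(\overline{\rho}_{1,\id{p}})$, so the adequacy and big-image hypotheses of Theorem~\ref{thm:MLT1} and Assumption~\ref{ass:MLT4} are automatic at every step of the chain.

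Now apply Lemma~\ref{lemma:Diamond} to $\overline{\rho}_{1,\id{p}}$: it is absolutely irreducible, not badly dihedral thanks to the good dihedral prime $\id{q}$, and has the required determinant on inertia above $p$ since we are in parallel weight $2$. The lemma produces a modular lift of level prime to $\id{l}$, which by Theorem 7.2.1 of \cite{Snowden}, or equivalently by the parallel-weight-$2$ case of Assumption~\ref{ass:MLT4} (Theorem 3.2.2 of \cite{BD}), can be chosen to have the same local type as $\rho_{1,\lambda}$ at every prime $\ne \id{l},\id{p}$, in particular preserving the good dihedral structure at $\id{q}$. This lift embeds into a strictly compatible system $\{\rho'_{\lambda}\}$, and by Lemma~\ref{lemma:typesatp} both sides of the congruence are potentially Barsotti--Tate or ordinary at primes above $p$, so the congruence $\rho_{1,\id{p}}\equiv \rho'_{\id{p}}\pmod{\id{p}}$ is MLT in both directions. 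Replacing $\{\rho_{1,\lambda}\}$ by $\{\rho'_{\lambda}\}$ strictly decreases $|S|$, and we iterate.

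The main obstacle is controlling the local behaviour at $p$: killing the ramification at $\id{l}$ may force Steinberg (or mild) ramification to appear at some prime above $p$, but because $p$ splits in $F$ and $p\le B$, any such new prime has norm exactly $p\le B$, which is permitted by the statement. A secondary subtlety is to ensure that the twisting and level-removal operations never disturb the good dihedral prime $\id{q}$ nor the residual bigness of the image at small characteristics; the restriction on the conductor of the twist and the persistence of the good dihedral local type at $\id{q}$ (guaranteed by the local deformation argument of Snowden and \cite{BD}) handle both concerns. The process terminates after $|S|$ steps, yielding a compatible system semistable away from $\id{q}$ with Steinberg primes bounded in norm by $B$.
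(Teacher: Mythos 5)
There is a genuine gap in your argument, and it comes from the choice of auxiliary prime and from a misreading of what Lemma~\ref{lemma:Diamond} does.

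You choose $p$ \emph{coprime} to the order of the inertial type at $\id{l}$, and then claim that after twisting by a character of conductor supported at $\id{l}$, the residual representation $\overline{\rho}_{1,\id{p}}$ is unramified at $\id{l}$. Both halves of this are wrong. If $p$ is prime to the order of the local inertial character, then reducing mod $\id{p}$ does not kill any of the ramification at $\id{l}$: the character survives with the same order, and $\overline{\rho}_{1,\id{p}}|_{I_{\id{l}}}$ still has the full inertial type. Twisting cannot rescue this: for a ramified principal series $\chi_1\oplus\chi_2$ with $\chi_1\neq\chi_2$ on inertia a global twist cannot make both components unramified, and for a supercuspidal type no twist makes the local representation unramified at all. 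Moreover, Lemma~\ref{lemma:Diamond} is specifically a statement about removing the part of the level that lies \emph{above the residual characteristic} $p$: it takes a form of level dividing $\id{n}p^r$ (with $\id{p}\mid p$, $\id{p}\nmid\id{n}$) and produces a lift of level dividing $\id{n}$. It says nothing about removing ramification at a prime $\id{l}\nmid p$. So applying it to $\overline{\rho}_{1,\id{p}}$ to get a lift unramified at $\id{l}$ is not justified.

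The paper's proof picks the residual characteristic so that reduction itself destroys the offending ramification. In the \emph{tamely ramified} case it takes $p$ to \emph{divide} the order of the local inertial character (the opposite of your choice): reducing mod a prime above that $p$ kills the $p$-part of the ramification at $\id{l}$ (and at all other tamely ramified primes whose character order is divisible by $p$), and a minimal parallel weight $2$ lift is then produced by the weight $2$ lifting theorem of \cite{BD}. In the \emph{wildly ramified} case one cannot choose $p$ freely; one must reduce mod $t$, the residual characteristic of the wild prime $\id{t}$, and this is precisely the setting in which Lemma~\ref{lemma:Diamond} applies, giving a lift whose level at $\id{t}$ is at most $\id{t}$ to the first power, up to a twist by an auxiliary character $\psi$ (the twist introducing only tamely ramified new primes). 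One then falls back to the tame case. Your single uniform pass, with one free prime $p$ per bad $\id{l}$, does not produce the needed residual unramifiedness, and the required dichotomy between tame and wild ramification is absent.
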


\begin{proof}
  Let $\lambda$ be a prime which is supercuspidal or principal
  series. Let $p$ be a prime number dividing the order of the
  character
   corresponding to ramification
  at $\lambda$. We consider two different cases: if $p$ is relative
  prime to $\lambda$, we call it the \emph{tamely ramified case},
  while the other case we call it the \emph{wildly ramified case}.

\medskip

\noindent {\bf The tamely ramified case.} Let $\id{p}$ be a prime
ideal in $K$ (the coefficient field) dividing $p$, and consider the
residual mod $\id{p}$ representation. Then the $p$-part of the
ramification is lost, so we take a minimal lift with the same parallel
weight $2$ (it exists by Theorem 3.2.2 in \cite{BD}). Observe that at this
step we are not only modifying the ramification type at $\lambda$, but
also at any other prime supercuspidal or principal series in the
prime-to-$p$ part of the conductor with ramification given by a
character of order divisible by $p$. Then by
Lemma~\ref{lemma:typesatp}, we might be in a mixed situation of
potentially Barsotti-Tate and ordinary representations. Using Theorem
3.2.2 of \cite{BD} (recall that this is a special case of our Assumption
~\ref{ass:MLT4}), we get rid of this $p$-part of the inertia with an
MLT congruence. Iterating this process, we end up killing all tamely
ramified ramification given by characters, i.e. the prime-to-$p$ part
of the ramification at primes dividing $p$ is killed. So we are
reduced to the case where all primes in the conductor are either
Steinberg or with ramification given by a prime order character whose
order is divisible by the ramified prime.

\medskip

\noindent {\bf The wildly ramified case}. In this case, we will move
the wildly ramified primes (up to twist) to tamely ramified primes, so
the previous argument ends our proof. For a prime $\id{t}$ in the
conductor of the wildly ramified case, let us call $t$ the rational
prime below $\id{t}$ and consider a mod $t$ congruence, up to twist by
some finite order character $\psi$, with the Galois representation
corresponding to a Hilbert newform $H$ of parallel weight $2$ with at
most $\id{t}$ to the first power in the level (i.e. $\Gamma_1$ at
$\id{t}$), and the same for the other primes dividing $t$.  The
existence of such a form comes from Lemma~\ref{lemma:Diamond}.

By level-lowering, we can assume that the only extra primes in the
level of $H$ are those primes that have been introduced to the
residual conductor while twisting by a character $\psi$. It is easy to
see that such character can be chosen such that at primes other than
those dividing $t$ it has square-free conductor. To this congruence,
Theorem 3.2.2 of \cite{BD} applies (the conditions for this theorem are
preserved by twisting, and modularity too) so we are reduced to a case
where we have a system that is either Steinberg or tamely ramified
principal series at primes dividing $t$, and tamely ramified principal
series at all extra primes introduced by $\psi$. If we iterate this
process at all wildly ramified primes, we end up with a system with no
wildly ramified primes. We repeat the previous case procedure of killing all
ramification given by tamely ramified characters, but now in the
absence of wild ramification we finish with a compatible system such
that all its ramified primes other than the good-dihedral prime are
Steinberg. It is not hard to see that in all this process, for a
suitably chosen bound $B$, all auxiliary primes can be taken to be
smaller than $B$.
\end{proof}

\begin{remark}
  Except for the application of Corollary 2.12 of \cite{BDJ} at a key
  point, and a better control of the local types (which seems
  reasonable for abstract representations), all congruences in the
  above proof are known to exist for abstract Galois representation,
  so an analogue of the above result for abstract compatible systems
  can be proved assuming that this result from \cite{BDJ} generalizes
  to the abstract setting (thus relating the modularity of any
  geometric compatible system to that of a system with only Steinberg
  primes).
\end{remark}



\subsection{Killing the Steinberg primes}

This part of the process is a little more delicate, and the MLT
theorems are more restrictive, so what we do first is move the
Steinberg primes to primes which split completely in $F$.

\begin{thm}
  Suppose that Assumption ~\ref{ass:MLT4} is true. Let
  $\{\rho_{1,\lambda}\}$ be a system of modular continuous, odd,
  irreducible, parallel weight $2$ representations with a big locally
  good dihedral prime $\id{q}$.  Let $\mathcal{L} \mid \ell$ ($\ell \in
  \ZZ$) be a Steinberg prime of the system which does not split
  completely in $F$. We also assume that the system is either unramified or Steinberg at all primes dividing $\ell$.
  Then:
  \begin{itemize}
  \item there exists a strictly compatible system
    $\{\rho_{2,\lambda}\}$ of continuous, odd, irreducible, parallel
    weight $2$ representations, which has the same ramification
    behavior at all primes except those dividing $\ell$, where it is
    unramified, and has at most a set of extra Steinberg primes, all
    of them dividing the same rational prime which splits completely
    in $F$.
  \item there exists a chain of MLT congruences linking the two systems.
  \end{itemize}
  In particular, the system $\{\rho_{2,\lambda}\}$ is also modular.
\label{thm:movingsteinbergprimes}
\end{thm}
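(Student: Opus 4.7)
The plan is to transfer the Steinberg ramification from $\mathcal{L}\mid\ell$ to a new prime $\mathcal{P}$ lying above a rational prime $p'$ that splits completely in $F$. I would first fix $p'$ split in $F$, coprime to $\ell$ and to the conductor of $\rho_1$, and then select an auxiliary congruence prime $p>B$ (with $p\neq\ell,p'$) such that the residual representation $\overline{\rho}_{1,\id{p}}$ is unramified at $\mathcal{L}$. For a Steinberg representation at $\mathcal{L}$, this unramifiedness amounts to the vanishing modulo $p$ of the tame monodromy class; intrinsically, it asks that $p$ divide the $\mathcal{L}$-adic valuation of the Tate-type parameter attached to $f$ at $\mathcal{L}$. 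The set of such $p>B$ is infinite provided that this valuation has a sufficiently large prime divisor, a property that can always be arranged by an auxiliary level-raising move at $\mathcal{L}$ if necessary. The good-dihedral prime $\id{q}$ then ensures adequacy of $\overline{\rho}_{1,\id{p}}|_{G_{F(\xi_p)}}$, so the MLT hypotheses are in force.

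I would then execute the swap via two mod-$p$ congruences. Step~(i), level raising at $\mathcal{P}$: by Chebotarev applied inside $\im(\overline{\rho}_{1,\id{p}})$, find a prime $\mathcal{P}\mid p'$ whose Frobenius characteristic polynomial on $\overline{\rho}_{1,\id{p}}$ matches the Steinberg pattern $X^2-\bar{a}X+\bar{d}$ with $\bar{a}^2\equiv\bar{d}(N(\mathcal{P})+1)^2/N(\mathcal{P})\pmod{p}$; then invoke Theorem~3.2.2 of \cite{BD} (the parallel weight $2$ case of Assumption~\ref{ass:MLT4}) combined with the compatible-system constructions in \cite{BLGGT2} to produce a compatible system $\{\tilde\rho_\lambda\}$ attached to a parallel weight $2$ newform $\tilde f$ that is Steinberg at both $\mathcal{L}$ and $\mathcal{P}$ and is congruent to $\rho_1$ modulo $\id{p}$. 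Step~(ii), level lowering at $\mathcal{L}$: since $\overline{\tilde\rho}_{\id{p}}=\overline{\rho}_{1,\id{p}}$ is unramified at $\mathcal{L}$ by construction, a second application of the same MLT lifts this residual representation to a parallel weight $2$ compatible system $\{\rho_{2,\lambda}\}$ that is crystalline (unramified) at $\mathcal{L}$ and still Steinberg at $\mathcal{P}$, with all other local types preserved. Both congruences are MLT in residue characteristic $p>B$, using the good-dihedral prime for adequacy and parallel weight $2$ for the mixed-type input.

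If $\rho_1$ has further Steinberg primes above $\ell$, iterate the two-step swap, always using fresh primes $\mathcal{P}_i\mid p'$ above the \emph{same} rational prime $p'$. All ramification away from the primes above $\ell$ and $p'$ is preserved automatically, since each congruence has residue characteristic $p$ coprime to those primes and the deformation conditions at all other places (including the good-dihedral prime $\id{q}$) are left untouched. The main obstacle will be the level-lowering Step~(ii): one must simultaneously remove $\mathcal{L}$ from the level \emph{and} change its local type from semistable (Steinberg) to crystalline, a scenario strictly beyond Ribet's classical regime and available to us precisely through Assumption~\ref{ass:MLT4}. A secondary subtlety, in the spirit of Khare--Wintenberger, is the arithmetic condition on $p$ needed for the residual Steinberg class at $\mathcal{L}$ to vanish mod $p$; if no large admissible $p$ exists directly, a preliminary level-raising at $\mathcal{L}$ (to enlarge the Tate-parameter valuation and introduce new admissible primes) must be inserted before the swap can be executed.
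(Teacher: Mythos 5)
Your approach is genuinely different from the paper's, and unfortunately it contains a gap that the paper's choice of congruence prime is specifically designed to avoid. You propose to move the Steinberg ramification at $\mathcal{L}$ to a split prime $\mathcal{P}$ via two congruences modulo an auxiliary prime $p>B$ with $p\neq\ell$, and both of your Steps~(i) and~(ii) force $\overline{\rho}_{1,\id{p}}$ to be unramified at $\mathcal{L}$ (your Step~(ii) level-lowering needs exactly this, since $\overline{\tilde\rho}_{\id{p}}=\overline{\rho}_{1,\id{p}}$). For a compatible system that is Steinberg at $\mathcal{L}$ with $\mathcal{L}\nmid p$, strict compatibility fixes the Weil--Deligne representation at $\mathcal{L}$ across the system, so the $\id{p}$-adic representation is a ramified unipotent extension at $\mathcal{L}$; its reduction is unramified precisely when $p$ divides the relevant tame class (in the elliptic-curve analogy, $p\mid v_{\mathcal{L}}(j)$, equivalently the Tamagawa-type quantity). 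For a fixed system this is a \emph{finite} arithmetic condition on $p$, and there is no reason at all for any $p>B$ to satisfy it. Your proposed remedy --- preliminary ``level-raising at $\mathcal{L}$ to enlarge the Tate-parameter valuation'' --- is not a recognized operation: $\mathcal{L}$ is already Steinberg, and raising the conductor there would change the local type to supercuspidal or worse, which does not enlarge the unipotent tame class but destroys it; nor is there a known congruence machine that manufactures new prime divisors of the Tate parameter while keeping the Steinberg type.

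The paper sidesteps this by using the \emph{mod-$\ell$} congruence, the Khare--Wintenberger weight trick. Reducing $\rho_{1,\lambda'}$ with $\lambda'\mid\ell$ means $\mathcal{L}$ is now a prime above the residual characteristic; the Steinberg ramification there is no longer ``conductor data'' subject to Weil--Deligne compatibility but ``weight data'' read off via Serre's recipe, and in particular the BDJ formalism (through Lemma~\ref{lemma:Diamond}) guarantees a crystalline, level-prime-to-$\ell$ lift of suitable parallel weight. That congruence is ordinary on both sides, so Assumption~\ref{ass:MLT4} (or Theorem~3.2.2 of \cite{BD}) gives MLT. Only afterwards does the paper pick a large split $p$ and pass to a parallel weight $2$ lift modulo $p$, which may introduce ramification at primes above $p$, all of which is controlled by the previous section. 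Your two-step swap would only become viable if you followed the paper and reduced modulo $\ell$ first; as written, the existence of the congruence prime $p$ is not established and the argument does not go through.
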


\begin{proof}
  We look at $\rho_{1,\lambda'}$ for a prime $\lambda'$ dividing
  $\ell$ and we reduce it modulo $\ell$.  We can construct then a
  modular lift which is unramified at $\mathcal{L}$ and at all primes
  dividing $\ell$, and with weights among those predicted by the
  Serre's weights of the residual representation.  This follows from
  iterated applications of Lemma~\ref{lemma:Diamond}
  since the determinant locally at primes above $\ell$ is a fixed
  power of the cyclotomic character, and the form (without $\ell$ in
  the level) has parallel weight as well.  Note that this congruence
  is MLT, at least under Assumption ~\ref{ass:MLT4}, which applies
  because at Steinberg primes, weight $2$ representations are
  ordinary, and the crystalline lift of higher weight can also be
  taken to be ordinary (observe that to deduce modularity of the
  weight $2$ family from the other one we can apply
  Theorem 3.2.2 in \cite{BD}). \\
  Let $p$ be a big prime (bigger than the weights of this second
  family) which splits completely in $F$, then by \cite{Snowden} we
  can construct a third family with parallel weight $2$ by looking
  modulo $p$, and this congruence is MLT because of the results in
  \cite{BLGGT2} (we are comparing a Fontaine-Laffaille with a
  potentially Barsotti-Tate representation, so they are both
  potentially diagonalizable). We apply the previous section method to
  this new family to end up with a representation which is at most
  Steinberg at all primes dividing $p$ as desired. Observe that in
  this last step no extra ramified primes are introduced because we do
  not have wild ramification at $p$ (the parallel weight $2$ lift
  implies tamely ramified principal series at all primes above $p$).
\end{proof}

Now that we have only totally split Steinberg primes in our family, we
can get rid of the Steinberg primes.

\begin{thm}
  Let $\{\rho_{1,\lambda}\}$ be a system of modular, irreducible,
  parallel weight $2$ representations, whose ramification consists of
  a big locally good dihedral prime $\id{q}$ and Steinberg primes
  which split completely in $F$. Then:
  \begin{itemize}
  \item There exists a strictly compatible system
    $\{\rho_{2,\lambda}\}$ of continuous, odd, irreducible
    representations which are only ramified at the locally good
    dihedral prime $\id{q}$.
  \item There exists a chain of MLT congruences linking the two systems.
  \end{itemize}
  In particular, the system $\{\rho_{2,\lambda}\}$ is also modular.
\label{thm:removingsteinbergprimes}
\end{thm}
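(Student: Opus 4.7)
The plan is to remove the Steinberg primes one at a time through a level-lowering argument in an auxiliary residual characteristic $\ell$ where the Steinberg ramification becomes residually invisible. The key arithmetic observation is that a Steinberg representation at a prime $\id{p}$ of residue characteristic $p$ is residually unramified modulo a prime $\lambda \mid \ell$ (with $\ell \ne p$) precisely when $\norm(\id{p}) \not\equiv 1 \pmod \ell$; since every Steinberg prime of $\{\rho_{1,\lambda}\}$ splits completely in $F$ by hypothesis, this reduces to the elementary condition $\ell \nmid p-1$.

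Fix a Steinberg prime $\id{p}$ of the current system lying above a rational prime $p$. By Chebotarev I would choose a rational prime $\ell \ge 7$ that splits completely in $F$, is coprime to the entire conductor of $\{\rho_{1,\lambda}\}$ (in particular coprime to $\id{q}$ and to $p$), and satisfies $\ell \nmid p - 1$. Let $\lambda \mid \ell$ be a prime of the coefficient field. By the choice of $\ell$ the residual representation $\overline{\rho}_{1,\lambda}$ is unramified at $\id{p}$ and is crystalline, in fact Fontaine--Laffaille, at every prime above $\ell$; the presence of the locally good dihedral prime $\id{q}$ together with $\ell \ge 7$ guarantees that the restriction of the image to $G_{F(\xi_\ell)}$ is adequate.

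Invoking level-lowering for Hilbert modular forms (as used in \cite{KW} and \cite{LuisBC2}), one produces a Hilbert newform $g$ of parallel weight $2$ whose attached residual representation is isomorphic to $\overline{\rho}_{1,\lambda}$, which is unramified at $\id{p}$, and whose local type at every other finite prime (in particular the supercuspidal type at $\id{q}$) matches that of the current form. Embedding $g$ in a strictly compatible system $\{\rho_{2,\lambda}\}$ through the machinery of \cite{LuisFamilies} and \cite{BLGGT2} gives a modular system with one fewer Steinberg prime. The congruence $\overline{\rho}_{1,\lambda} \simeq \overline{\rho}_{2,\lambda}$ is MLT by Theorem~\ref{thm:MLT3}: both representations are parallel weight $2$ and Fontaine--Laffaille above $\ell$ (thus potentially diagonalizable), adequacy holds thanks to $\id{q}$, and modularity is available on both sides.

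Iterating this construction, using for each Steinberg prime a fresh auxiliary $\ell$ adapted to the underlying rational prime, yields a finite chain of MLT congruences terminating at a strictly compatible system whose ramification is concentrated at $\id{q}$. The main obstacle is to verify that the level-lowering step genuinely preserves the good dihedral type at $\id{q}$ and all other local types outside $\id{p}$, so that the chain does not inadvertently destroy the adequacy-guaranteeing structure that makes subsequent MLT's applicable; this is standard provided each auxiliary $\ell$ avoids the residue characteristic at $\id{q}$ and is coprime to the orders of the tame inertial characters appearing in the level, both of which are ensured by the Chebotarev choice above.
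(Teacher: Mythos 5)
Your approach has a genuine gap in the key arithmetic step. You claim that a Steinberg representation at $\id{p}$ is residually unramified modulo $\lambda \mid \ell$ (with $\ell \ne p$) precisely when $\norm(\id{p}) \not\equiv 1 \pmod \ell$, and you use this to pick an auxiliary $\ell$ making the Steinberg ramification invisible. This criterion is false. For a Steinberg local representation (e.g., the $\ell$-torsion of an elliptic curve with multiplicative reduction at $p$, realized via the Tate parametrization $\overline{\Q}_p^*/q^{\Z}$), inertia at $p$ acts trivially on the mod $\ell$ representation if and only if $\ell \mid v_p(q)$ (equivalently $\ell \mid v_p(\Delta)$). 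The congruence class of $p$ modulo $\ell$ is irrelevant; the relevant valuation depends on the particular form, not on a Chebotarev choice. So the prime $\ell$ you would pick does not in general kill the ramification at $\id{p}$, and the level-lowering step you invoke cannot be applied.

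The paper avoids this trap by reducing modulo the residual characteristic $p$ of the Steinberg prime itself (not an auxiliary $\ell$): for $\lambda \mid p$, the Steinberg restriction to $D_{\id{p}}$ becomes ordinary of a non-Steinberg Serre weight, and Lemma~\ref{lemma:Diamond} (a consequence of the weight analysis in \cite{BDJ}) then produces a modular lift that is crystalline at all primes above $p$, hence unramified at $\id{p}$, with all other local types preserved. The fact that $p$ splits completely in $F$ is exactly what makes Theorem~\ref{thm:MLT1} applicable to certify that this congruence is MLT (together with a preliminary use of Theorem~\ref{thm:movingsteinbergprimes} when $p \in \{2,3\}$). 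If you want to rescue an auxiliary-$\ell$ strategy you would need a genuine level-lowering theorem plus the valuation condition $\ell \mid v_{\id{p}}(\cdot)$, which you cannot arrange by Chebotarev; the in-characteristic approach of the paper is the correct one here.
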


\begin{proof}
  The procedure is quite similar to the one in the previous Theorem,
  but now we can use Theorem~\ref{thm:MLT1}. For each Steinberg prime
  of residual characteristic $\ell$ we look at
  $\overline{\rho}_{1,\lambda}$ with $\lambda \mid \ell$. There exists
  a minimal lift which is crystalline at all primes of residual
  characteristic $\ell$ by Lemma~\ref{lemma:Diamond}. Now, since the
  prime is split, the congruence is MLT by the cited Theorem. The only
  delicate point here is that to apply Theorem~\ref{thm:MLT1} we need
  the residual characteristic to be different from $2$ and $3$, so if
  we have such a small Steinberg prime, we first apply
  Theorem~\ref{thm:movingsteinbergprimes} to transfer the ramification
  to Steinberg ramification at some larger split prime.
\end{proof}

Remark: In the previous two Theorems, modularity of the given system
was only used to ensure existence of the lift corresponding to a
system with no $\ell$ in its conductor, and this was deduced from
results of \cite{BDJ}. In the case of abstract Galois representations,
after computing the Serre's weights of the residual representation
one should be able to propose locally at all primes above $\ell$ a
crystalline lift of the residual local representation, but then the
problem is that in order to apply results such as those in \cite{BLGGT2}
that guarantee existence of a global lift with such local
conditions, one should be in a potentially diagonalizable case. In particular, under
the conjecture that all potentially crystalline representations are potentially
diagonalizable, the previous two theorems shall generalize to the abstract
setting (of course, the conclusion that the last system is modular
shall also be removed).

\begin{proof}[Proof of Theorem~\ref{thm:main}] With the machinery
  developed in the previous chapters, starting with two modular
  representations, we can take each of them to representations which
  are only ramified at the good dihedral prime $\id{q}$. The only
  problem is that while killing the Steinberg primes, we lost control
  over the weights, so to take the families to parallel weight $2$, we
  chose an auxiliary prime $p$ which splits completely in $F$ and
  consider a parallel weight $2$ lift of each family modulo $p$, with
  the same local type at $\id{q}$. Note that we cannot assure that the
  forms we get at the end of the process will be newforms for
  $\Gamma_0(p\id{q}^2)$, because at some of the prime ideals of $F$
  dividing $p$ our representations could be unramified.
\end{proof}

\section{Further developments}\label{sec:last}

As mentioned before, although we have some good control on the level
of the forms we started with, the primes in the level are not
explicit. One can go further and change the primes in the level for
smaller and concrete ones, so as to check whether the connectedness
of the Hecke algebras in Mazur's Theorem corresponds to a chain
where all congruences are MLT or not. For this purpose, one can use
the notion of \emph{micro good dihedral
  primes}. Adding a micro good dihedral prime (which is chosen asking
some splitting behavior in the base field) one can get rid of the
good dihedral prime, and also bound the Steinberg primes in the
level. These ideas, although standard (see for example
\cite{LuisBC}) are more delicate, and involve some technicalities
that we prefer to avoid in the present article. With this control,
one can give an algorithm that given a totally real number field,
checks whether our approach implies Base Change over that field via
a finite computation involving Hilbert modular forms. See
\cite{LuisAriel} for more details. \\
Concerning Serre's conjecture over a specified small real quadratic
field, one should carefully check that the chain of congruences
constructed carries on to the case of abstract representations, and
then once having reduced the problem to the case of representations
of concrete small invariants (those where the above process
concludes, after the introduction of the micro good dihedral prime)
one should connect such representations to some ``base case" where
modularity or residual reducibility is known (applying for example
the result of Schoof recalled in the introduction), checking on the
way that all congruences are MLT (with the advantage that over
quadratic fields there are also MLT of Skinner and Wiles that deal
with the residually reducible case, under suitable assumptions). We
plan to check in a future work if this strategy succeeds in giving a
proof of Serre's conjecture over some small real quadratic field.





\bibliographystyle{alpha}
\bibliography{biblio}
\end{document}